\theoremstyle{plain}
\newtheorem{Thm}{Theorem}[section]
\newtheorem{Lem}[Thm]{Lemma}
\newtheorem{Pro}[Thm]{Proposition}
\newtheorem{Cor}[Thm]{Corollary}
\newtheorem{bigthm}{Theorem}
\theoremstyle{definition}
\newtheorem{Def}[Thm]{Definition}
\newtheorem{Rem}[Thm]{Remark}
\newtheorem*{Rem-intro}{Remark}
\newcommand{\Dirlim}{\varinjlim}
\newcommand{\Invlim}{\varprojlim}
\newcommand{\Holim}{{holim}}
\newcommand{\FFF}{{\mathbb{F}}}
\newcommand{\KF}{K}
\newcommand{\ZZ}{{\mathbb{Z}}}
\newcommand{\QQ}{{\mathbb{Q}}}
\newcommand{\CC}{{\mathbb{C}}}
\newcommand{\RR}{{\mathbb{R}}}
 \newcommand{\PP}{{\mathbb{P}}}
\newcommand{\GL}{{\mathrm{GL\,}}}
\newcommand{\GLo}{{\mathrm{GL_o\,}}}
 \newcommand{\DX}{{\mathcal{D}}X}
 \newcommand{\DD}{{\mathcal{D}}}
 \newcommand{\FF}{{\mathcal{F}}}
  \newcommand{\IX}{{\mathcal{I}}X}
\def\:{\colon\!}
\let\csname subjclassname@1991\endcsname \subjclassname
\begin{document}


\title[Spaces of sections of  Banach algebra bundles]
{Spaces of sections of  Banach algebra bundles}
\author[Farjoun]{Emmanuel Dror Farjoun}
\address{Department of Mathematics,
     Hebrew University of Jerusalem,
     Jerusalem 91904, Israel}
\email{farjoun@huji.ac.il}
\author[Schochet]{Claude L.~Schochet}
\address{Department of Mathematics,
     Wayne State University,
     Detroit MI 48202}
\email{claude@math.wayne.edu}

\thanks{ }
\keywords{ general linear group of a Banach algebra, spectral sequences,   localization, Bott-stable, $K$-theory for Banach algebras, unstable $K$-theory}
\subjclass[2010]{46L80, 46L85, 46M20, 55Q52, 55R20, 55T25}
\begin{abstract}

Suppose that $B$ is a $G$-Banach algebra over $\FFF = \RR$ or $\CC$,  $X$ is a  finite dimensional compact metric space,
$\zeta : P \to X$ is a standard principal $G$-bundle,  and $A_\zeta = \Gamma (X, P \times _G B)$ is the  associated  algebra of sections.
 We produce  a spectral sequence
which  converges to $\pi _*(\GLo A_\zeta )  $ with
\[
E^2_{-p,q}  \cong \check{H}^p(X ; \pi _q(\GLo B) ).
\]
A related spectral sequence converging to
$\KF _{*+1}(A_\zeta )$ (the real or complex topological $K$-theory) allows us to conclude  that if $B$ is Bott-stable, (i.e., if
$ \pi_*(\GLo B) \to \KF_{*+1}(B)$ is an isomorphism for all $*>0$) then so is $A_\zeta$.
\end{abstract}
\maketitle
\tableofcontents

 \section{Introduction}

   Suppose that $X$ is a finite-dimensional compact metric space and  $ \zeta : P  \to X$ is a principal $G$ bundle for some
   topological group $G$ that acts on a Banach algebra $B$\footnote{We
     consider Banach algebras over     $\FFF = \RR$ or $\CC$ and use
     $\KF_*$ to denote real or complex topological $K$-theory, respectively.} by algebra automorphisms.   Let $E = P  \times _G B \to X$  be the associated fibre bundle.
Assume that the fibre bundle is {\emph{standard}}.\footnote{Being \emph{standard} is a technical condition which is automatic if $X$ has the homotopy type
of a finite CW-complex. See Definition 5.1.}
  Let
 \[
 A_\zeta = \Gamma (X, E)
 \]
denote  the set of continuous sections of the bundle. This has a natural structure of a   Banach algebra. If $B$ is unital then
$A_\zeta $ is also unital, with identity the canonical
 section that to each point $x \in X$ assigns the identity in $E_x$.

 We are interested in $\GL A_\zeta $, the group of invertible elements
 in
 $A_\zeta$. (If $A_\zeta $ is not unital then we understand this to mean the kernel of the natural map
 $\GL(A_\zeta ^+) \to \GL(\CC )$.)
This is a space of the homotopy type of a CW-complex,  second countable if $B$ is separable. It may have many
(homeomorphic) path components; let  $\GLo A_\zeta $
denote the path component of the identity.

Let $\PP $ denote a subring of the rational numbers. (We allow the cases $\PP = \ZZ$ and $\PP = \QQ$ as well as intermediate rings.) Cohomology is reduced \v Cech cohomology.   Our principal result is the following. It is proved in Sections 4 and 5.

 \begin{bigthm}\label{T:theoremA}
  Suppose that $X$ is a finite-dimensional compact metric space  and  that
   $\zeta : E =   P  \times _G B  \to X    $ is a standard fibre bundle of Banach algebras over $\FFF$. Let
 $A_\zeta $ denote the associated section algebra.
Then:
 \begin{enumerate}
\item There is a second quadrant spectral sequence
  converging to $\pi _*(\GLo A_\zeta )\otimes \PP  $
 with
 \[
 E^2_{-p,q}  \cong \check{H}^p(X ; \pi _q(\GLo B)\otimes \PP )
 \]
 and
 \[
 d^r : E^r_{-p,q} \longrightarrow E^r_{-p-r, q+r-1}  .
 \]
 \item If
 $X$ has dimension at most $n$, then $E^{n+1} = E^\infty $.
  \item The spectral sequence is natural with respect to pullback diagrams
  \[
  \begin{CD}
  f^*E @>>>   E  \\
  @VVV  @VVV    \\
   X' @>f>>   X
 \end{CD}
 \]
 and associated map $f^*:   A_\zeta \longrightarrow  A_{f^*\zeta } $.
 \item The spectral sequence is natural with respect to $G$-equivariant maps $\alpha : B \to B'$ of   Banach algebras.

 \end{enumerate}
\end{bigthm}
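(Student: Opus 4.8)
The plan is to identify $\GLo A_\zeta$ with a path component of the section space $\Gamma(X,\GL E)$ of the associated bundle of topological groups $\GL E = P\times_G\GL B\to X$, and to produce the spectral sequence as the section analogue of the Federer spectral sequence: first via a skeletal filtration when $X$ is a finite CW complex, then by passing to an inverse limit of polyhedra to reach an arbitrary standard bundle over a finite-dimensional compact metric space, and finally tensoring with $\PP$. Working with $\GLo B$ rather than all of $\GL B$ is what makes this clean: $\GLo B$ is a connected topological group, so every fibre occurring in the construction is grouplike and the resulting exact couple is one of abelian groups, with no $\pi_0$/$\pi_1$ basepoint pathology.

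Suppose first that $X$ is a finite CW complex of dimension $n$ with skeleta $X^{(0)}\subset\cdots\subset X^{(n)}=X$. Restriction of sections yields a tower $\Gamma(X,\GL E)\to\Gamma(X^{(n-1)},\GL E)\to\cdots\to\Gamma(X^{(0)},\GL E)$ in which each map $\Gamma(X^{(k)},\GL E)\to\Gamma(X^{(k-1)},\GL E)$ is a Hurewicz fibration, since $\GL E$ is locally trivial and $(X^{(k)},X^{(k-1)})$ satisfies the homotopy extension property. Over any section the fibre of this map is the space of sections of $\GL E$ over $X^{(k)}$ extending a prescribed section on $X^{(k-1)}$; because each $k$-cell is contractible the bundle trivialises over it, so this fibre, when nonempty, is homotopy equivalent to $\prod_{\sigma}\Omega^k\GLo B$ indexed by the $k$-cells $\sigma$ (and to $\prod_v\GLo B$ over the vertices when $k=0$). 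Hence $\pi_{q-p}$ of the $p$-th fibre is the group $C^p(X;\pi_q(\GLo B))$ of cellular $p$-cochains with the local coefficients induced by the $G$-action of $\zeta$ on $\pi_q(\GLo B)$, and restricting at each stage to the component of the identity section exhibits $\GLo A_\zeta$ as the top of the tower. The exact couple of this finite tower is therefore a second-quadrant spectral sequence with $E^1_{-p,q}=C^p(X;\pi_q(\GLo B))$; the standard identification of $d^1$ with the cellular coboundary gives $E^2_{-p,q}\cong H^p(X;\pi_q(\GLo B))$, converging to $\pi_*(\GLo A_\zeta)$, with the stated bidegree for $d^r$. Since the tower has length $n+1$, all $d^r$ with $r\ge n+1$ vanish, so $E^{n+1}=E^\infty$, proving (2). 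Naturality in $X$ (part (3)) comes from the naturality of the skeletal filtration after replacing $f$ by a cellular approximation on a common subdivision, and naturality in $B$ (part (4)) holds because every step above is functorial in the $G$-Banach algebra $B$.

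To remove the CW hypothesis, tensor the exact couple with $\PP$: flatness of $\PP$ over $\ZZ$ makes $-\otimes\PP$ exact, so it commutes with the finite products and with homology and preserves the finite convergence, yielding a spectral sequence with $E^2_{-p,q}\cong H^p(X;\pi_q(\GLo B)\otimes\PP)$ converging to $\pi_*(\GLo A_\zeta)\otimes\PP$. Now write the $n$-dimensional compact metric space $X$ as an inverse limit $X=\Invlim_\alpha X_\alpha$ of finite polyhedra of dimension $\le n$; by standardness of $\zeta$ (Definition 5.1) the bundle and its section algebra are compatibly obtained from the $X_\alpha$, so a continuity argument using compactness of $X$ gives $\pi_*(\GLo A_\zeta)\cong\Dirlim_\alpha\pi_*(\GLo A_{\zeta_\alpha})$, while $\Dirlim_\alpha H^p(X_\alpha;M)=\check{H}^p(X;M)$ essentially by the definition of \v{C}ech cohomology. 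A filtered colimit of spectral sequences is again one, so the colimit of the localized skeletal spectral sequences over $\alpha$ is the asserted spectral sequence, still collapsing at $E^{n+1}$ because the filtration lengths are uniformly bounded by $n+1$.

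The two substantive obstacles are: (i) verifying that each restriction map in the skeletal tower is a fibration with fibre identified as a product of iterated loop spaces of $\GLo B$, which is where local triviality, the homotopy extension property, and a careful analysis of which extension problems are solvable and how the components of the section spaces sit inside $\GLo$ all enter; and (ii) the passage from finite polyhedra to a standard bundle over a finite-dimensional compact metric space, i.e.\ establishing $\pi_*(\GLo A_\zeta)\cong\Dirlim_\alpha\pi_*(\GLo A_{\zeta_\alpha})$, which depends on compactness and on the precise content of \emph{standard}. The remaining care is bookkeeping: tracking the local coefficient system induced by $\zeta$ through the skeletal spectral sequence and checking its compatibility with the colimit.
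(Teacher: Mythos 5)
Your overall architecture (skeletal filtration for a finite complex, then a direct limit of spectral sequences over an inverse system of polyhedra, with \v{C}ech cohomology appearing in the limit) matches the paper's, but at the finite-complex stage you take a genuinely different route. The paper writes the section space as a homotopy limit over the simplex category of $X$ (Lemma 3.1), localizes the diagram of fibres \emph{first} --- this is where the main theorem of \cite{F} on localization of homotopy limits of nilpotent diagrams over finite-dimensional index categories enters, nilpotence being supplied by the connectivity of the topological group $\GLo B$ --- and then takes the Bousfield--Kan spectral sequence of the localized, skeletally filtered homotopy limit, so the abutment is literally $\pi_*\bigl((\GLo A_\zeta)_\PP\bigr)$. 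You instead build the integral Federer-type exact couple from the tower of restriction fibrations of section spaces and tensor it with $\PP$, invoking flatness. For the statement at hand that shortcut is viable: every $E^1$-entry is a product of copies of $\pi_q(\GLo B)$, hence abelian, and in positive total degree the abutment filtration consists of subgroups of the abelian groups $\pi_j(\GLo A_\zeta)$, so $-\otimes\PP$ commutes with the passage to derived couples and with convergence. What it does not give is the space-level equivalence $(\GLo A_\zeta)_\PP\simeq\Holim\big[F_\sigma\big]_{\PP,o}$, which the paper reuses for its comparison and collapse results; it is more elementary, at the price of some care at the $\pi_0$/$\pi_1$ fringe, which you only gesture at.

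Two points need repair. First, your $E^2$-identification carries the local coefficient system on $\pi_q(\GLo B)$ induced by the bundle (monodromy of $\pi_1(X)$ through the structure group), whereas Theorem A --- and the paper's identification, which trivializes the bundle over the wedges of $p$-spheres coming from $X^{(p)}/X^{(p-1)}$ and takes the ordinary cellular coboundary --- asserts constant coefficients and indeed emphasizes that the $E^2$-term is independent of $\zeta$. These do not obviously coincide: when $G$ is not connected (for instance a finite group permuting direct summands of $B$) the induced action on $\pi_q(\GLo B)$ can be nontrivial. So you must either prove that the coefficient system is constant in the situation considered, or accept that what you prove is the twisted-coefficient variant; as written, your argument does not deliver the $E^2$-term exactly as stated.

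Second, the bridge from the polyhedra $X_\alpha$ to $X$ is asserted rather than proved, and it is precisely where the hypothesis \emph{standard} does its work. You need (a) that $\zeta$ is compatibly pulled back from bundles $\zeta_j$ over the nerves $X_j$ of an Eilenberg--Steenrod inverse system --- in the paper this uses the classification of principal bundles by maps to $BG$ together with the results of \cite{KSS}, which require $BG$ to have the homotopy type of a CW complex, i.e.\ exactly the standardness hypothesis --- and (b) the isomorphism $\Dirlim_j\pi_*(\GLo A_{\zeta_j})\cong\pi_*(\GLo A_\zeta)$, which is not a formal compactness or continuity statement, since $\Gamma(X,\cdot)$ is not a colimit of the $\Gamma(X_j,\cdot)$; this occupies Section 5 of the paper. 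The same machinery (the bijection $[X',X]\cong\Invlim_j\Dirlim_i\,[X_i',X_j]$ from \cite{KSS}) is also what yields naturality (3) for an arbitrary continuous map of finite-dimensional compact metric spaces; your cellular-approximation remark covers only simplicial or CW maps of finite complexes.
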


Note that the $E^2$-term of the spectral sequence is independent of the bundle $\zeta $. Generally, this spectral sequence has several  types of non-zero differentials, and some of
these definitely {\emph{do}} depend upon the bundle. There is one very special case in which the spectral sequence does collapse at $E^2$.

 \begin{Cor}\label{collapsecorollary}  (\cite{LPSS}, \cite {KSS}) With the above notation,  if $A_\zeta $ is a unital continuous-trace $C^*$-algebra over $\CC $ and $\PP = \QQ $ then $E^{2} = E^\infty $
and
\[
\pi _*(\GLo A_\zeta )\otimes \QQ  \cong  \check{H}^*(X ; \pi _*(\GLo B)\otimes \QQ ) .
\]
\end{Cor}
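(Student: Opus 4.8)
The plan is to deduce Corollary~\ref{collapsecorollary} from Theorem~\ref{T:theoremA} by showing that, rationally, the spectral sequence has {\emph{no room}} for nonzero differentials once we know two facts about unital continuous-trace \ca s: first, the relevant homotopy groups $\pi_q(\GLo B)\otimes\QQ$ are concentrated in a way (Bott periodicity for the stable part together with the structure of the fibre) that forces the $E^2$ columns to be sparse; and second, that the classical computations of \cite{LPSS} and \cite{KSS} identify the rational homotopy of the section algebra with the cohomology of $X$ with the appropriate coefficients, so that a dimension count shows $E^2 = E^\infty$. Concretely, for a unital continuous-trace \ca{} the fibre $B$ is (stably, and up to the relevant homotopy) a matrix algebra or $\KK$, so $\GLo B \simeq \GLo(\KK)$ rationally, whose homotopy is $\QQ$ in each positive even degree (complex case) and periodic with period $2$; the key point is that $\pi_q(\GLo B)\otimes\QQ$ is a $\QQ$-vector space placed only in degrees of one parity.

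First I would recall the relevant background: a unital continuous-trace \ca{} over $\CC$ with spectrum $X$ is, up to Morita-type considerations, the section algebra of a bundle with fibre $M_n(\CC)$ or $\KK$, and by Theorem~\ref{T:theoremA} we get the second-quadrant spectral sequence with $E^2_{-p,q}\cong\check H^p(X;\pi_q(\GLo B)\otimes\QQ)$. I would then invoke the rational computation $\pi_q(\GLo B)\otimes\QQ\cong\pi_q(\GLo\KK)\otimes\QQ$, which vanishes unless $q$ is positive and even (in the complex case), in which case it is $\QQ$. So the nonzero entries of $E^2$ sit in columns $-p$ with $0\le p\le\dim X$ and in rows $q$ even and positive.

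The heart of the argument is that the differentials $d^r\colon E^r_{-p,q}\to E^r_{-p-r,q+r-1}$ must vanish. For $r$ even, the target row $q+r-1$ is odd, hence zero, so $d^r=0$. For $r$ odd, the differential could a priori be nonzero, and this is the {\emph{main obstacle}}: one cannot kill odd differentials by a naive parity count. Here I would use the external input from \cite{LPSS} and \cite{KSS}: those papers compute $\pi_*(\GLo A_\zeta)\otimes\QQ$ directly and show it has total dimension (in each total degree) equal to $\bigoplus_{-p+q=*}\check H^p(X;\pi_q(\GLo B)\otimes\QQ)$ — equivalently, the Euler-characteristic-type count of $E^2$ already matches $\pi_*$. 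Since the spectral sequence converges to $\pi_*(\GLo A_\zeta)\otimes\QQ$ and $E^\infty$ is a subquotient of $E^2$ of the same total dimension in each total degree, no differential (of any parity) can be nonzero, so $E^2=E^\infty$, and the extension problems are trivial because everything is a $\QQ$-vector space. This yields the claimed isomorphism $\pi_*(\GLo A_\zeta)\otimes\QQ\cong\check H^*(X;\pi_*(\GLo B)\otimes\QQ)$.

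Alternatively — and this is the cleaner route if one wants to avoid quoting the final answer from \cite{LPSS}, \cite{KSS} — I would argue that for a continuous-trace algebra the bundle $E$ has structure group $\mathrm{Aut}(\KK)\simeq PU$, which is rationally a product of Eilenberg--Mac Lane spaces, so the associated bundle of groups $P\times_G\GLo\KK$ is rationally a product of bundles with fibres $K(\QQ,2k)$-type spaces; the Federer/Atiyah--Hirzebruch type spectral sequence for sections of such a bundle collapses rationally because each such bundle is rationally trivial (the $k$-invariants and clutching data are torsion). I expect the main obstacle in making {\emph{this}} version rigorous to be checking that "standard" continuous-trace bundles really do have this rationally-trivial structure and that the collapse of the section spectral sequence follows from rational triviality of the relevant Postnikov tower — a point where I would lean on the naturality clauses (3) and (4) of Theorem~\ref{T:theoremA} to reduce to the trivial bundle.
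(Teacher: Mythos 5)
Your main route is essentially the paper's own proof: the paper's Section 8 theorem quotes \cite{KSS} (rational $H$-equivalence of $\GLo A_\zeta$ with the function space $F(X,\GLo B)$) and \cite{LPSS} (rational homotopy of such function spaces) to obtain the isomorphism $\pi_*(\GLo A_\zeta)\otimes\QQ \cong \check{H}^*(X;\pi_*(\GLo B)\otimes\QQ)$, and deduces $E^2=E^\infty$ exactly by your comparison of the abutment with the $E^2$-term. The only slip is cosmetic: for a unital continuous-trace algebra the fibre is $M_n(\CC)$, so $\pi_q(\GLo B)\otimes\QQ$ is concentrated in \emph{odd} degrees $q\le 2n-1$ and $\GLo B$ is not rationally $\GLo(\KK)$, but this does not affect your parity/counting argument.
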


Note that the assumptions in Corollary \ref{collapsecorollary} imply that   the Dixmier-Douady
invariant in $\check{H}^3(X; \QQ )$ is zero. Thus this corollary does not contradict the non-collapse results of Atiyah and Segal \cite{AS}. See Remark \ref{differentials} for more discussion of differentials in the spectral sequence.

The Corollary was established for $X$ compact metric, $B = M_n(\CC ) $, $\PP = \QQ $ and $\zeta $ trivial, so
  that $A_\zeta = F(X, M_n(\CC ))$, by Lupton, Phillips, Schochet, and Smith
\cite{LPSS} and in general by Klein, Schochet, and Smith \cite {KSS}.  The idea of using a spectral sequence to compute
$\pi _*(F(X,Y))$  goes back to Federer \cite{F}, and  note particularly the work of S. Smith \cite{Smith}. A more abstract
study of the homotopy groups of sections was undertaken by Legrand \cite{Leg} and his spectral sequence would seem
to overlap with ours in special cases.

This spectral sequence is analogous to the Atiyah-Hirzebruch spectral sequence. In fact, if   the functor $\pi _*(\GLo(-))\otimes\QQ $
is replaced
by $\KF_*(-)$ then this has been studied by J. Rosenberg \cite{Rosenberg}   in the context of continuous-trace $C^*$-algebras.
Our Theorem B generalizes his result.

Note that in many cases of interest, for instance    $B = M_n(\CC ) $,     the  groups $\pi _*(\GLo B)$ are unknown, and so the
integral version of the spectral sequence cannot be used directly to compute $\pi _*(\GLo A_\zeta )$. However, frequently the groups
$\pi _*(\GLo B)\otimes\QQ$ {\emph{are}} known and hence the rational form of the spectral sequence will be more tractable.

Our approach to this problem has two steps. First,   assume that $X$ is a finite complex,
 replace the space E by an appropriate diagram of spaces, localize the diagram, and then filter it.
     This is a consequence, noted in \cite{F},
  of the basic  Fibre Lemma of Bousfield and Kan  \cite{BK}.
    Next,   assume that $X$ is a finite-dimensional compact
metric space, write it as an inverse limit of finite CW-complexes, and use our first step results together with limit techniques of \cite{KSS}  to complete the argument.

Using the Banach algebra version of Bott periodicity established by R. Wood \cite{Wood} and M. Karoubi \cite{Kar} and taking limits of spectral sequences, we derive the following.

\begin{bigthm}\label{T:theoremB} Suppose that $X$ is a finite-dimensional compact metric space, $B$ is a    Banach algebra over $\FFF$, and $\zeta : E \to X$ is a standard fibre bundle of $B$-algebras.
Then there is a second quadrant spectral sequence
 \[
 E^2_{-p,q}(A_{\zeta _\infty }     )   \cong \check{H}^p(X ; \KF _{q+1}(B)\otimes \PP )   \Longrightarrow    \KF _{*+1}(A_\zeta )\otimes \PP
 \]
which is the direct limit of the corresponding spectral sequences converging to
\[
 \pi _*(\GLo (A_{\zeta}\otimes M_n(\FFF ) )\otimes \PP  .
 \]
If $X$ has dimension at most $n$ then $E^{n+1} = E^\infty $.
\end{bigthm}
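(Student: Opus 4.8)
The plan is to deduce Theorem~\ref{T:theoremB} from Theorem~\ref{T:theoremA} by applying the latter to the matrix amplifications $B\otimes M_n(\FFF)$ and then passing to the colimit over $n$; note that this requires Theorem~\ref{T:theoremA} only for the \emph{finite} algebras $B\otimes M_n(\FFF)$, never for a ``$\KK$-amplified'' bundle. First I would observe that $G$ acts on $B\otimes M_n(\FFF)$ through the first factor, that $P\times_G\bigl(B\otimes M_n(\FFF)\bigr)\cong E\otimes M_n(\FFF)$, and that, since $M_n(\FFF)$ is finite dimensional and $X$ is compact, $\Gamma\bigl(X,E\otimes M_n(\FFF)\bigr)\cong A_\zeta\otimes M_n(\FFF)$. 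One checks that $E\otimes M_n(\FFF)$ is again a standard fibre bundle of Banach algebras (tensor the trivializing data with $M_n(\FFF)$), so Theorem~\ref{T:theoremA} supplies, for each $n$, a second quadrant spectral sequence converging to $\pi_*\bigl(\GLo(A_\zeta\otimes M_n(\FFF))\bigr)\otimes\PP$ with
\[
E^2_{-p,q}\cong\check H^p\bigl(X;\pi_q(\GLo(B\otimes M_n(\FFF)))\otimes\PP\bigr),
\]
and with $E^{m+1}=E^\infty$ whenever $\dim X\le m$.

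Next I would organize these into a direct system. The corner inclusions $M_n(\FFF)\hookrightarrow M_{n+1}(\FFF)$, $a\mapsto a\oplus 0$, are $G$-equivariant (trivial action) algebra homomorphisms $B\otimes M_n(\FFF)\to B\otimes M_{n+1}(\FFF)$, so part~(4) of Theorem~\ref{T:theoremA} yields a compatible system of morphisms of spectral sequences. Since filtered colimits of abelian groups are exact, they commute with the formation of homology, so $\Dirlim_n$ of this system is again a spectral sequence, with $E^r$-page $\Dirlim_n E^r_{-p,q}$ and abutment $\Dirlim_n\bigl(\pi_*(\GLo(A_\zeta\otimes M_n(\FFF)))\otimes\PP\bigr)$. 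This is, by construction, the direct limit asserted in the statement, and the collapse $E^{m+1}=E^\infty$ for $\dim X\le m$ is inherited from part~(2) of Theorem~\ref{T:theoremA} since passing to the colimit commutes with taking $E^\infty$. (The completed colimit $\Dirlim_n(A_\zeta\otimes M_n(\FFF))$ is $A_\zeta\otimes\KK\cong\Gamma(X,E\otimes\KK)=A_{\zeta_\infty}$, which is why the limit deserves the name.)

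It then remains to identify the two ends. For $q\ge 1$ one has $\pi_q(\GLo(A_\zeta\otimes M_n(\FFF)))=\pi_q(\GL_n(A_\zeta))$, hence $\Dirlim_n\pi_q(\GLo(A_\zeta\otimes M_n(\FFF)))\cong\pi_q(\GL_\infty(A_\zeta))$; the Banach-algebra Bott periodicity of Wood and Karoubi identifies this last group with the topological $K$-group $\KF_{q+1}(A_\zeta)$, so the colimit spectral sequence abuts to $\KF_{*+1}(A_\zeta)\otimes\PP$ (tensoring with $\PP$ is harmless since it is exact). The same argument on the fibre gives $\Dirlim_n\pi_q(\GLo(B\otimes M_n(\FFF)))\cong\KF_{q+1}(B)$, and, as reduced \v Cech cohomology of a compact metric space commutes with direct limits of coefficient groups,
\[
\Dirlim_n E^2_{-p,q}\cong\check H^p\bigl(X;\KF_{q+1}(B)\otimes\PP\bigr),
\]
as required. (In the bottom row $q=0$ the fibre term vanishes while $\KF_1(B)$ need not; but Bott periodicity shows the rows $q\ge 1$ already carry all of $\KF_0$ and $\KF_1$, so this is just the usual $\ZZ/2$-graded reindexing of an Atiyah--Hirzebruch type spectral sequence and no information is lost.)

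I expect the main obstacle to be the stabilization bookkeeping together with convergence of the limit: one must check that the connecting maps of the direct system, the identification of $\GLo$ of a matrix amplification with the unstable linear group $\GL_n$, and the passage $\Dirlim_n\pi_q(\GL_n(-))\cong\KF_{q+1}(-)$ are mutually compatible and compatible with the naturality of Theorem~\ref{T:theoremA}, and---above all---that the colimit spectral sequence still converges strongly. The latter is exactly where finite-dimensionality of $X$ is used: the uniform vanishing $E^{m+1}=E^\infty$ from Theorem~\ref{T:theoremA}(2) survives the colimit and forces strong convergence, while the Wood--Karoubi periodicity is the ingredient that turns the limit of unstable homotopy groups into honest, Bott-periodic topological $K$-theory.
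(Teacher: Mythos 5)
Your proposal is correct and follows essentially the same route as the paper: apply Theorem~\ref{T:theoremA} to the bundles with fibre $B\otimes M_n(\FFF)$, use naturality in the fibre to form a direct system of spectral sequences, pass to the filtered colimit (exactness preserved), identify the $E^2$-term via continuity of \v Cech cohomology in the coefficients, and identify the abutment with $\KF_{*+1}(A_\zeta)\otimes\PP$ by Wood--Karoubi Bott periodicity, with the collapse at $E^{n+1}$ inherited from the finite stages. Your extra care about the $q=0$ row and about checking that the amplified bundles remain standard goes slightly beyond what the paper records, but the argument is the same.
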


This result, established in Section 6, is due to J. Rosenberg \cite{Rosenberg} when $A_\zeta $ is a continuous-trace $C^*$-algebra over a finite complex $X$.

Next,  compare the spectral sequences of Theorems A and B.

\begin{Def}\label{R;bottstable}  Let $B$ be a    Banach algebra over $\FFF $. Let us call $B$ {\emph{Bott-stable}} if the natural map
 \[
 \pi_*(\GLo B) \to \KF_{*+1}(B)
 \]
   is an isomorphism for every $* > 0$, or, equivalently, if the natural map
   \[
   \GLo B \to \Dirlim \GLo (B\otimes M_n(\FFF ))
   \]
   is a homotopy equivalence.    Similarly, let us call $B$ {\emph{rationally  Bott-stable}} if the natural map
 \[
 \pi_*(\GLo B)\otimes \QQ  \to \KF_{*+1}(B)\otimes\QQ
 \]
   is an isomorphism.
\end{Def}
For example, the Calkin algebra $\mathcal{L/K}$ is Bott-stable.
The Bott Periodicity theorem for Banach algebras  may be stated as follows:
If the Banach algebra $B$  is  {\emph{stable}} in the sense that the inclusion of a rank one projection induces an isomorphism
$B \overset{\cong}\longrightarrow  B\otimes {\mathcal{K}}$
(where  $ {\mathcal{K}}$ denotes the compact operators), then $B$ is Bott-stable.

We introduce the term {\emph{Bott-stable}} with some
trepidation, but we have been unsuccessful in locating a prior use of this concept in the literature.

 Bott stability is a bit weaker than K. Thomsen's use \cite{Thomsen} of {\emph{$K$-stable}}. He shows that many
 important complex $C^*$-algebras, such as
 $\mathcal O_n\otimes B$   $(\mathcal O_n$  the Cuntz algebra, $B$ a $C^*$-algebra), infinite-dimensional simple AF algebras, properly infinite von Neumann algebras and the   corona algebra of $\sigma$-unital $C^*$-algebras are   $K$-stable and hence
Bott-stable.

The following theorem is proved at the end of Section 7.

 \begin{bigthm}\label{T:theoremC}
Suppose that $X$ is a finite-dimensional compact metric space, $B$ is a   Banach algebra over $\FFF $,
 and $\zeta : E \to X$ is a standard fibre bundle of $B$-algebras.
If $B$ is Bott-stable
then   $A_\zeta $ is Bott-stable.   If $B$ is rationally Bott-stable then   $A_\zeta $ is rationally Bott-stable.
\end{bigthm}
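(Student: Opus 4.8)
The plan is to compare the two spectral sequences of Theorems~A and~B via the natural transformation $\pi_*(\GLo(-))\otimes\PP \to \KF_{*+1}(-)\otimes\PP$ and to invoke a comparison theorem for convergent spectral sequences. First I would observe that the Bott-stability hypothesis on $B$ says precisely that the natural map $\pi_q(\GLo B)\otimes\PP \to \KF_{q+1}(B)\otimes\PP$ is an isomorphism for all $q>0$ (with $\PP=\ZZ$ in the integral case, $\PP=\QQ$ in the rational case). Since the spectral sequence of Theorem~A is natural in the Banach algebra $B$ (part~(4) of Theorem~A), and since the construction of Theorem~B exhibits its spectral sequence as the direct limit of the Theorem~A spectral sequences for the algebras $A_\zeta\otimes M_n(\FFF)$, there is a morphism of spectral sequences
\[
E^r_{-p,q}(A_\zeta) \longrightarrow E^r_{-p,q}(A_{\zeta_\infty})
\]
covering the map $\pi_*(\GLo A_\zeta)\otimes\PP \to \KF_{*+1}(A_\zeta)\otimes\PP$ on the abutments. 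On $E^2$ this morphism is
\[
\check{H}^p(X;\pi_q(\GLo B)\otimes\PP) \longrightarrow \check{H}^p(X;\KF_{q+1}(B)\otimes\PP),
\]
induced by the coefficient map, hence an isomorphism for every $q>0$ by the hypothesis.

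Next I would address the range $q\le 0$, where the $E^2$-terms need not agree. The key point is that because $X$ is finite-dimensional, say of dimension $\le n$, the spectral sequence lives in the strip $0\le p\le n$, so only finitely many columns are nonzero; moreover for computing $\pi_N$ of the abutment with $N>0$ one only needs the finitely many entries $E^\infty_{-p,q}$ with $q-p=N$, and these all have $q=N+p\ge N>0$. Thus in the region contributing to positive homotopy of the abutments, the two spectral sequences have isomorphic $E^2$-terms; by induction on $r$ (using that a morphism of spectral sequences which is an isomorphism on a region of $E^r$ closed under the relevant differentials is an isomorphism on $E^{r+1}$ there — here one must check that incoming and outgoing differentials at entries with $q>0$ land in / come from entries with $q'>0$ as well, which holds since $d^r$ changes $q$ by $r-1\ge 1$, hence increases $q$, and the entry $E^r_{-p-r,q+r-1}$ it maps to also has second index $>0$; the only worry is the \emph{source} of a differential hitting a $q>0$ entry, namely $E^r_{-p+r,q-r+1}$, whose second index could be $\le 0$, but such a differential is irrelevant since its effect is already accounted for once we know the map is iso on the relevant $E^r$ subquotient — more carefully, one restricts attention from the start to the full subcategory of entries with $q>0$, which is a sub-bicomplex-like structure closed under $d^r$). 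Hence $E^\infty_{-p,q}(A_\zeta)\to E^\infty_{-p,q}(A_{\zeta_\infty})$ is an isomorphism for all $q>0$.

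Finally I would run the abutment comparison: both spectral sequences converge (Theorem~A and Theorem~B each assert $E^{n+1}=E^\infty$, and the abutments are filtered with finitely many nonzero quotients in each total degree), so a morphism inducing an isomorphism on all $E^\infty_{-p,q}$ with $q>0$ induces an isomorphism on the associated graded of $\pi_N(\GLo A_\zeta)\otimes\PP \to \KF_{N+1}(A_\zeta)\otimes\PP$ for every $N>0$, hence on $\pi_N\otimes\PP \to \KF_{N+1}\otimes\PP$ itself by the five lemma applied to the finite filtration. Taking $\PP=\ZZ$ gives that $A_\zeta$ is Bott-stable when $B$ is; taking $\PP=\QQ$ gives the rational statement. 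The main obstacle I anticipate is the bookkeeping in the second paragraph: one must set up the comparison argument so that it genuinely only uses the agreement of the two spectral sequences in the half-plane $q>0$, being careful that the differentials $d^r$ (which raise the $q$-coordinate by $r-1$) never force one to control a subquotient with $q\le 0$ when computing the positive-degree part of the abutment. Once that is phrased correctly — restricting throughout to the sub-object spanned by entries with $q>0$, which is preserved by all $d^r$ — the rest is the standard mapping-lemma/five-lemma machinery.
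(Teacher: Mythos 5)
Your proposal is correct and follows essentially the same route as the paper: map the Theorem~A spectral sequence for $A_\zeta$ to the Theorem~B (direct limit) spectral sequence via the natural maps $\GLo B \to \GL_n B$, note the $E^2$-isomorphism given by Bott-stability, and invoke the standard comparison theorem to conclude the abutment map $\pi_*(\GLo A_\zeta)\otimes\PP \to \KF_{*+1}(A_\zeta)\otimes\PP$ is an isomorphism. Your extra bookkeeping restricting to the region $q>0$ is a more careful treatment of a point the paper's terse proof passes over silently, but it does not change the argument.
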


We expect to apply these results in several directions. For example, we hope to shed light on differentials in the twisted $K$-theory spectral sequence by
looking at them in the homotopy spectral sequence and then mapping over.

 The  authors  wish to thank John Klein, Chris  Phillips, and Sam Smith for continuing support
in this and related papers and especially Max Karoubi and Jim Stasheff
 for extensive and very useful comments on earlier
versions of this work. Our editor, Jon Rosenberg, observed that our results might generalize  from the complex to the real setting without any essential modification and we
have followed his advice.

\section{The Non-Unital Case}

We pause to prove a technical result that allows us to handle the non-unital Banach algebra case uniformly with the
unital case.

Suppose   that $B$ is a non-unital  Banach algebra over $\FFF $ with a $G$ action, $P  \to X$ is a principal $G$ bundle, and
$A_\zeta = \Gamma (X, P  \times _G B )$. There is a natural unitization  $B^+$ of $B$ and a canonical
short exact sequence
\[
0 \to B \longrightarrow B^+ \overset{p}{\longrightarrow} \FFF  \to 0
\]
with a natural splitting $\FFF  \to B^+$. The $G$-action extends from $B$ to $B^+$ canonically with the group fixing the identity. Hence
there is
  a split exact sequence of bundles
\[
\begin{CD}
P \times _G B @>>>    P \times _G B^+  @>p>>    P \times _G \FFF  \\
@VVV  @VVV  @VVV   \\
X @>1>>   X @>1>>   X.
\end{CD}
\]
Define
 \[
  A_{\zeta ^+ } = \Gamma (X, P \times _G B^+) .
\]
The induced action of $G$ on $\FFF  $ must be the trivial action, and so $P \times _G \FFF \cong X  \times \FFF  $ is a trivial
line bundle.  Thus $\Gamma (X, P \times _G \FFF  ) \cong C(X, \FFF ) $ and
the induced map
\[
p_*:  \GLo (  A_{\zeta ^+ } ) \longrightarrow \GLo (\Gamma (X, \FFF - 0 ))  \cong C(X, \FFF  - 0)
 \]
 splits canonically.
 Recall that  the general linear group of the non-unital Banach algebra $A_\zeta $ is defined by
\[
\GL(A_\zeta ) = Ker [\GL ( (A_\zeta )^+)  \longrightarrow \GL( (A_\zeta )^+/ A_\zeta ) \cong  \FFF  - {0} ]
\]

\begin{Pro} (N. C. Phillips \cite{Phillips})
With the above notation,
\begin{enumerate}
\item
There is an isomorphism of topological groups
\[
\GL(A_\zeta )  \cong Ker \big[    p_*:  \GL (  A_{\zeta ^+ } ) \longrightarrow   C(X, \FFF  - 0)  \big].
\]
\item There is an isomorphism of topological groups
\[
\GL(A_\zeta )  \cong   \Gamma (X, P \times _G \GL B ).
\]
\item
There is a canonically split short exact sequence
\[
0 \to \pi _*( \GL(A_\zeta ))  \longrightarrow \pi _*(  \GL (  A_{\zeta ^+ } ) ) \longrightarrow \pi _*( C(X,  \FFF - 0) ) \to 0.
\]
\end{enumerate}
\end{Pro}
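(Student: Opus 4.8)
The plan is to establish part (2) first, since parts (1) and (3) will follow from it together with the split exact sequence of bundles already constructed above. For part (2), the key point is that invertibility is a local and fibrewise condition. Given a section $s \in A_{\zeta^+}$, the element $s$ lies in $\GL(A_{\zeta^+})$ if and only if $s(x)$ is invertible in the fibre $E^+_x = P_x \times_G B^+$ for every $x \in X$, and in that case the assignment $x \mapsto s(x)^{-1}$ is again continuous (inversion is continuous on the general linear group of a Banach algebra, and this passes to the bundle because the bundle is locally trivial). Thus $\GL(A_{\zeta^+}) \cong \Gamma(X, P \times_G \GL B^+)$, where $P \times_G \GL B^+ \to X$ is the fibre bundle with fibre $\GL B^+$ and structure group $G$ acting through its action on $B^+$ (which preserves $\GL B^+$ since $G$ acts by algebra automorphisms). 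One must check this is an isomorphism of topological groups, i.e. that the compact-open topology on sections matches the topology induced from $A_{\zeta^+}$; this is routine since $X$ is compact. Now, using the splitting $\FFF \to B^+$ fixed by $G$, the map $p \colon B^+ \to \FFF$ induces a $G$-equivariant retraction $\GL B^+ \to \FFF - 0$ with kernel $\GL B$, and passing to sections identifies $\Ker[p_* \colon \GL(A_{\zeta^+}) \to C(X, \FFF - 0)]$ with $\Gamma(X, P \times_G \GL B)$, which by the definition of $\GL(A_\zeta)$ recalled just above is exactly $\GL(A_\zeta)$. This gives both (1) and (2) simultaneously.

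For part (3), apply the homotopy functor to the split short exact sequence of topological groups from part (1),
\[
1 \to \GL(A_\zeta) \to \GL(A_{\zeta^+}) \xrightarrow{\ p_*\ } C(X, \FFF - 0) \to 1,
\]
which is split by the canonical section $C(X,\FFF - 0) \cong \Gamma(X, P\times_G(\FFF - 0)) \to \GL(A_{\zeta^+})$ coming from the $G$-fixed inclusion $\FFF \hookrightarrow B^+$. A split short exact sequence of topological groups is in particular a fibration sequence (the projection $p_*$ admits local — indeed global — sections), so it yields a long exact sequence of homotopy groups; the splitting makes every connecting map zero and produces, for each $* \geq 1$, a split short exact sequence
\[
0 \to \pi_*(\GL(A_\zeta)) \to \pi_*(\GL(A_{\zeta^+})) \to \pi_*(C(X, \FFF - 0)) \to 0,
\]
with the splitting induced by the section. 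One should remark that on $\pi_0$ the sequence is a sequence of groups only in a weak sense, but the statement as quoted concerns $\pi_*$ in positive degrees (or one restricts to the identity components throughout), so no difficulty arises there.

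The only genuine subtlety — and the step I expect to be the main obstacle — is the verification that $\GL$ of the section algebra really is the \emph{full} space of sections of the $\GL$-bundle, i.e. that a section which is pointwise invertible has a \emph{continuous} inverse section, and that the bijection $\GL(A_{\zeta^+}) \cong \Gamma(X, P \times_G \GL B^+)$ is a homeomorphism, not merely a continuous bijection of groups. This is where the hypotheses that $X$ is compact (so that uniform continuity and the tube lemma are available on local trivializations) and that the bundle is locally trivial with structure group acting by isometric algebra automorphisms get used: on each trivializing open set $U$ the bundle restricts to $U \times B^+$, inversion $\GL B^+ \to \GL B^+$ is continuous, and a partition-of-unity / finite-subcover argument patches the local inverses together into a global continuous inverse section with continuous dependence on the original section. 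Once this local-to-global invertibility statement is in hand, everything else is formal, and indeed this is the content attributed to N. C. Phillips \cite{Phillips}.
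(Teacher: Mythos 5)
Your argument is correct in substance but follows a different route from the paper. The paper's proof consists of a single algebraic observation (due to Phillips): if $D$ is an ideal in a unital Banach algebra $E$, then the abstract unitization $D^+$ may be identified, isometrically on $D$ and homeomorphically on $D^+$, with the subalgebra $\{\, d + tI \in E \ :\ d \in D,\ t \in \FFF \,\}$; applying this with $D = A_\zeta$ and $E = A_{\zeta^+}$ identifies $(A_\zeta)^+$ with $A_\zeta + \FFF\cdot 1 \subset A_{\zeta^+}$, whence $\GL(A_\zeta) \cong \Ker\big[p_*\colon \GL(A_{\zeta^+}) \to C(X,\FFF-0)\big]$, the unital identification $\GL(A_{\zeta^+}) \cong \Gamma(X, P\times_G \GL B^+)$ being treated as essentially definitional. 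You instead work fibrewise from the start: pointwise invertibility together with continuity of inversion gives the section description for the unitized bundle, and you then cut down along $p_*$. Both routes work; the paper's buys brevity by reducing everything to one lemma about ideals and unitizations, while yours makes explicit the local-triviality and continuity-of-inversion content that the paper leaves implicit.

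Two points in your write-up need tightening. First, the step ``which by the definition of $\GL(A_\zeta)$ recalled just above is exactly $\GL(A_\zeta)$'' is not by definition: $\GL(A_\zeta)$ is defined via the abstract unitization $(A_\zeta)^+$ (one adjoined scalar unit), whereas your kernel lives inside $A_{\zeta^+}$ (fibrewise units), and bridging the two is precisely the content the paper attributes to Phillips. It can be supplied either by the lemma above or directly: if $s = 1 + a$ with $a \in A_\zeta$ lies in $\Ker p_*$, its pointwise inverse has the form $1 + c(x)$ with $c(x) \in B$ (because $B$ is an ideal in $B^+$), $c$ is continuous since inversion is continuous, so $c \in A_\zeta$ and $s$ is already invertible in $A_\zeta + \FFF\cdot 1 \cong (A_\zeta)^+$; the converse inclusion is clear. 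Second, your appeal to a partition of unity to ``patch local inverses'' is unnecessary and, taken literally, incorrect: the inverse of an invertible section is unique, so the locally defined inverses agree automatically and continuity is a purely local matter checked in each trivialization (one cannot combine inverses by a partition of unity). Neither point undermines your overall plan, and your derivation of part (3) from the split extension of topological groups is fine and matches what the paper leaves unstated.
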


\begin{proof}
(Phillips) If $D$ is an ideal in any unital Banach algebra $E$ then one can identify
the unitization $D^+$ with the set
\[
\{ d + tI  \in E  \,\,|\,\,  d \in D, t \in \FFF  \} .
\]
 This identification is isometric on $D$ and a homeomorphism on $D^+$.
Taking $D = A_\zeta $ and $E  =   A_{\zeta ^+} $  implies that
\[
  A_\zeta   \cong Ker \big[ A_{\zeta ^+}  \longrightarrow C(X, \FFF ) \big]
\]
 which yields the identification of $\GL (A_\zeta )$.
 \end{proof}

This theorem allows us to write
\[
\GL A_\zeta  \cong \Gamma (X, P \times _G \GL (B) )
\]
in the non-unital case (where it is not at all obvious) as well as in the unital case (where it is true essentially by definition) and
hence develop the two cases at once.\footnote{The referee notes that for any algebra the group $\GL(A) $ has a  description as
\[
\GL(A) = \{ a \in A \,\,|\,\,  \exists \, b \in A\,\, :\,\,   ab  =  ba = a + b \}
\]
with multiplication $g \circ h := g + h - gh$ and unit $0$.  This identification makes the identification of  $\GL A_\zeta $ obvious.}

For completeness, note that the groups
$ \pi _j (\GLo (A_\zeta )) $ and $  \pi _j(\GLo (A_{\zeta ^+})) $
contain essentially the same information, so that if one is known the other is determined, as follows.

The groups $\pi _*( C(X,  \CC - 0) )$ are well-known:

\begin{enumerate}
\item
$
\pi _0(C(X, \CC - 0)) = \pi _0( F(X, K(\ZZ, 1)) = \check{H}^1(X; \ZZ)
$
  essentially by definition;
\vglue .1in
\item
$
\pi _1(C(X, \CC - 0))   = \check{H}^0(X; \ZZ)
$
\hglue .5in
by the early result of R. Thom \cite{Thom},  and
\vglue .1in
\item
$
\pi _j(C(X, \CC - 0))   = 0  \qquad\qquad j > 1
$
  also by Thom's result.
    \end{enumerate}

  The real case is even simpler, since $\RR - 0 \simeq  {\pm 1}$. Thus
  \[
  C(X, \RR - 0 ) \cong  C(X, {\pm 1} )  = C (\pi _0(X), {\pm 1} )
  \]
  which is a zero - dimensional set, finite if $X$  has a finite number of path components.

\vglue .1in
To summarize:

\begin{Thm} Suppose that $X$ is a compact metric space, $\zeta : P \to X$ is a principal $G$-bundle, $B$ is
a non-unital Banach algebra with a $G$-action, and $A_\zeta $ is the associated Banach algebra of sections.
Then
\begin{enumerate}
\item There are  naturally split short exact sequences
\[
0 \to \pi _j (\GL (A_\zeta )) \longrightarrow  \pi _j(\GL (A_{\zeta ^+})) \longrightarrow \check{H}^{1-j}(X ; \ZZ ) \to 0
\]
for $j = 0, 1$.
\item For $j \geq 2$ (and $j \geq 1$ in the real case)  there is a natural isomorphism
\[
\pi _j (\GLo (A_\zeta )) \overset{\cong}{\longrightarrow}  \pi _j(\GLo (A_{\zeta ^+})) .
\]
\end{enumerate}
\end{Thm}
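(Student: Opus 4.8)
The plan is to obtain both parts by substituting the known homotopy groups of $C(X,\FFF-0)$ into the canonically split short exact sequence furnished by part~(3) of the Proposition of N.~C.~Phillips above. Concretely, the canonical ring splitting $\FFF\to B^+$ of the unitization sequence induces a continuous homomorphic section of $p_*\colon \GL(A_{\zeta^+})\to C(X,\FFF-0)$ whose kernel is $\GL(A_\zeta)$, and applying $\pi_j$ yields, for every $j\ge 0$, the split exact sequence
\[
0\to \pi_j(\GL(A_\zeta))\longrightarrow \pi_j(\GL(A_{\zeta^+}))\overset{p_*}{\longrightarrow}\pi_j(C(X,\FFF-0))\to 0 .
\]
For $j=0$ this is genuinely a sequence of groups, since $\pi_0$ of a topological group is a group and the section identifies $\GL(A_{\zeta^+})$ with $\GL(A_\zeta)\times C(X,\FFF-0)$ as a space; for $j\ge 1$ the groups in question are abelian.

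Next I would pass to identity components. For $j\ge 1$ the homotopy groups of a topological group depend only on the identity component, so $\pi_j(\GL(A_\zeta))=\pi_j(\GLo(A_\zeta))$, and likewise for $A_{\zeta^+}$ and for $C(X,\FFF-0)$; hence in that range the sequence may be rewritten with $\GLo$ throughout. I then substitute the values of $\pi_j(C(X,\FFF-0))$ recorded just above the theorem. In the complex case these are $\check H^1(X;\ZZ)$ for $j=0$ (using that $S^1=K(\ZZ,1)$ represents $\check H^1$ on the paracompact space $X$), $\check H^0(X;\ZZ)$ for $j=1$ by Thom's theorem \cite{Thom}, and $0$ for $j\ge 2$; this gives the split short exact sequences of part~(1) for $j=0,1$ and makes $p_*$ an isomorphism for $j\ge 2$, which is part~(2).

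In the real case $C(X,\RR-0)\simeq C(X,\{\pm 1\})=C(\pi_0 X,\{\pm 1\})$ is a discrete group, so all of its homotopy vanishes above degree zero; hence $p_*\colon \pi_j(\GLo(A_\zeta))\to\pi_j(\GLo(A_{\zeta^+}))$ is already an isomorphism for every $j\ge 1$ (the parenthetical strengthening in part~(2)), and the only instance of part~(1) that persists is $j=0$, whose cokernel is the discrete group $C(\pi_0 X,\{\pm 1\})\cong\check H^0(X;\ZZ/2)$. Finally, naturality of all the splittings and isomorphisms with respect to pullback in $X$ and $G$-equivariant maps in $B$ is automatic, since the section $s$, the quotient map $p$, and the trivialization $P\times_G\FFF\cong X\times\FFF$ are all canonical.

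I do not anticipate a real obstacle here: the theorem is a repackaging of the Phillips Proposition together with the classical computation of the homotopy of the mapping space into $S^1$. The two points deserving a moment's care are (i) the bookkeeping of group structures along the $j=0$ line, so that ``split short exact sequence'' is literally correct rather than merely a statement about pointed sets, and (ii) the replacement of $\GL$ by $\GLo$ in part~(2), which uses nothing beyond the fact that $\pi_j$ of a topological group agrees with $\pi_j$ of its identity component for $j\ge 1$.
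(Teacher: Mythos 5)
Your argument is correct and is essentially the paper's own: the theorem is stated there as a summary of the Phillips Proposition (whose part (3) gives the canonically split sequence you use, coming from the homomorphic section of $p_*$) combined with the Thom computation of $\pi_*(C(X,\FFF-0))$ and the observation that $\pi_j$ only sees the identity component for $j\geq 1$. Your remark that in the real case the $j=0$ cokernel is $C(\pi_0X,\{\pm 1\})$ rather than $\check{H}^{1}(X;\ZZ)$ is a fair reading of the (complex-case) statement and matches the paper's preceding discussion.
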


\section{Diagrams and Localization}

Let $X$ denote  a finite simplicial complex.
Denote by $\IX$ the small category whose set of objects is the set of simplices of $X$ and  a morphism $\sigma \to \tau$ is a face inclusion
of  two simplices  in the complex $X.$
 Thus there is at most one morphism between any two objects of $\IX.$
The category $\IX$ serves as an indexing category for a diagram of spaces $\DX,$ namely, a functor:  $\DX : \IX \to Top $  of topological spaces
that assigns to each  simplex  in the simplicial complex $X,$  the underlying space of $\sigma.$ So  $\DX$ is a diagram of  contractible spaces, each homeomorphic to some $n-$ simplex, indexed by $\IX$.

  Now given a fibration $F \to T \overset{p}\to X,$  let us denote by  $F_\sigma = p^{-1}(\sigma )$ the inverse images of a given simplex in the total space.
 Notice that for any inclusion of simplices $\sigma\subseteq \tau $ there is   an inclusion of inverse images $F_\sigma\subseteq F_\tau$ and these inclusions are coherent.  Thus  the fibration (in fact any map to X) yields a diagram $\tilde F$ over $\IX$  made up of the various spaces  $F_\sigma,$ where $\sigma$ runs over all the simplices of $X.$

We now present the space of sections  as a homotopy limit using the following standard equivalence:
\begin{Lem}\label{sectionaslimit} There is a natural equivalence
\[
\Gamma (X, T)  \simeq \Holim _{\sigma \in \IX} F_\sigma  .
\]
\end{Lem}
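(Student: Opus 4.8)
The plan is to identify the space of sections $\Gamma(X,T)$ directly with a homotopy limit of the diagram $\tilde F = \{F_\sigma\}_{\sigma \in \IX}$, using the fact that $X$ itself is the homotopy colimit of the tautological diagram $\DX$ of simplices. First I would recall the standard cofibrant model: since $X$ is a finite simplicial complex, the canonical map $\mathrm{hocolim}_{\sigma \in \IX} \DX \to X$ is a homeomorphism (indeed, the bar construction on the diagram of closed simplices recovers $X$ with its CW structure). Dually, a map $X \to T$ over $X$ — i.e. a section — is the same as a compatible family of maps $\sigma \to T$ over the inclusions $\sigma \hookrightarrow X$, each landing in $p^{-1}(\sigma) = F_\sigma$; and a map $\sigma \to F_\sigma$ over $\sigma$, with $\sigma$ contractible, is just a point of $\Gamma(\sigma, F_\sigma)$. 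The assembly of these over the category $\IX$ is precisely the (strict) limit of the diagram $\sigma \mapsto \Gamma(\sigma, F_\sigma)$ of section spaces.

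Next I would upgrade this strict limit to a homotopy limit. The key point is that the diagram of section-space restrictions is ``fibrant enough'': for each face inclusion $\sigma \subseteq \tau$ the restriction map $\Gamma(\tau, F_\tau) \to \Gamma(\sigma, F_\sigma)$ is a fibration (restriction of sections along a cofibration, using that $p$ is a fibration), and the indexing category $\IX$ is a finite poset (more precisely, a Reedy category in an obvious way, with the simplices graded by dimension). Under these hypotheses the strict limit over $\IX$ computes the homotopy limit — this is the standard fact that a Reedy-fibrant (equivalently, objectwise-fibrant with the matching maps fibrations) diagram over a direct category has $\lim \simeq \Holim$. Since each $\sigma$ is contractible, $\Gamma(\sigma, F_\sigma) \simeq F_\sigma$ naturally, so $\Holim_{\sigma \in \IX} \Gamma(\sigma, F_\sigma) \simeq \Holim_{\sigma \in \IX} F_\sigma$, giving the asserted equivalence $\Gamma(X,T) \simeq \Holim_{\sigma \in \IX} F_\sigma$.

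An alternative, perhaps cleaner, route is purely formal: $\Gamma(X, T) = \mathrm{map}_X(X, T) = \mathrm{map}_X(\mathrm{hocolim}_\sigma \DX, T)$, and the universal property of homotopy colimits in the slice category $\Top/X$ turns this into $\Holim_\sigma \mathrm{map}_X(\DX(\sigma), T) = \Holim_\sigma \Gamma(\sigma, F_\sigma) \simeq \Holim_\sigma F_\sigma$; one only needs that the homotopy colimit decomposition $\mathrm{hocolim}_\sigma \DX \xrightarrow{\ \sim\ } X$ holds, which is classical for simplicial complexes, plus continuity of $\mathrm{map}_X(-, T)$.

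I expect the main obstacle to be bookkeeping rather than a conceptual one: making precise that the strict limit over $\IX$ agrees with the homotopy limit. This requires checking the cofibrancy of the diagram $\DX$ (equivalently, fibrancy of the restriction tower of section spaces), which hinges on each simplicial inclusion $\sigma \hookrightarrow \tau$ being a cofibration and $p$ being a fibration — both true — together with the Reedy/direct-category structure on $\IX$ coming from the dimension filtration. The technical care lies in verifying the matching-object condition (that the map from $\Gamma(\tau, F_\tau)$ to the appropriate limit over the boundary of $\tau$ is a fibration), which follows from the pullback-corner (``pushout-product'') property of the fibration $p$ against the cofibrations $\partial\tau \hookrightarrow \tau$. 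Once that is in hand, the equivalence is immediate and natural in $T$.
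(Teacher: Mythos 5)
Your first step is correct and, in fact, more careful than the paper's own outline: writing $\Gamma(X,T)$ as the strict limit over $\IX^{op}$ of the section spaces $\Gamma(\sigma,F_\sigma)$, and observing that this diagram is Reedy fibrant (the matching maps $\Gamma(\tau,F_\tau)\to\Gamma(\partial\tau,F_{\partial\tau})$ are fibrations by the pullback-corner property of the fibration $p$ against $\partial\tau\hookrightarrow\tau$), legitimately gives $\Gamma(X,T)\simeq \Holim_{\IX^{op}}\Gamma(\sigma,F_\sigma)$; your slice-category variant proves the same statement.

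The genuine gap is the final step, ``since each $\sigma$ is contractible, $\Gamma(\sigma,F_\sigma)\simeq F_\sigma$ naturally, so the homotopy limits agree.'' The two diagrams have opposite variance: $\sigma\mapsto\Gamma(\sigma,F_\sigma)$ is contravariant on $\IX$ (restriction along face inclusions), whereas the diagram in the statement, $\sigma\mapsto F_\sigma$ with the inclusions $F_\sigma\subseteq F_\tau$, is covariant on $\IX$. So there is no levelwise natural transformation to appeal to, and the natural candidates for the equivalence $\Gamma(\sigma,F_\sigma)\simeq F_\sigma$ (evaluation at a chosen point of $\sigma$, or a chosen fibre-homotopy trivialization over $\sigma$) commute with the structure maps for $\sigma\subseteq\tau$ only up to homotopy; organizing these homotopies coherently is exactly the content of the lemma, not a formality. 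This is precisely where the paper's argument does its work: because $\DX$ is a free (projectively cofibrant) $\IX$-diagram of contractible spaces, one has $\Holim_{\IX}\tilde F\simeq \mathrm{Map}_{\IX}(\DX,\tilde F)$, the space of strictly coherent families of maps $\sigma\to F_\sigma$, and this is then compared with the subspace of genuine sections. If you want to close the gap along your lines, you still need that comparison; one concrete way is to identify $\mathrm{Map}_{\IX}(\DX,\tilde F)$ with the space of maps $g\colon X\to T$ such that $p\circ g$ carries each closed simplex into itself, note that composition with $p$ makes this a fibration over the space of carrier-preserving self-maps of $X$ (which is contractible by straight-line homotopies within each simplex), and conclude that the fibre over $\mathrm{id}_X$, namely $\Gamma(X,T)$, includes by a weak equivalence. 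As written, your argument proves the correct and useful statement $\Gamma(X,T)\simeq\Holim_{\IX^{op}}\Gamma(\sigma,F_\sigma)$, but not the identification with $\Holim_{\sigma\in\IX}F_\sigma$ asserted in the lemma.
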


\begin{proof}

Here is an outline of  a proof:  Let $\DX: \IX\to Top$ be the diagram as above.  It is not hard to see  that
it is a free diagram and thus for any other diagram $W:\IX\to Top$ indexed by $\IX,$ the homotopy limit of $W$
is given by the space of "strictly coherent maps of diagrams"

$$holim_{\IX} W\cong Map_{\IX}(\DX, W).$$

In addition, the total space itself  may be written as a homotopy colimit of the diagram $(F_\sigma),$
   $\,\ T\cong hocolim_{\IX}F_\sigma $ and of course  for the identity $X\to X$ we get $X\cong hocolim_{\IX}\DX.$


 We now notice that any section $X\to T$ gives a map of diagrams $\DX\to \tilde T,$ since any simplex $\sigma$ in $X$ is mapped to the space $F\sigma$ over it, an the various maps are coherent with respect to the inclusion of simplices.
Moreover, every such map of diagrams gives, by gluing, a map $X\to T $ which is a section.
Namely, given a map between the two diagrams of spaces $f: \DX\to \tilde T$ then $hocolim_{\DX} f$ is the desired section by the comment above, since in that case the homotopy colimit coincides with the strict direct limit, since both diagrams are free over $\IX$.


Thus the desired result follows  $$holim_{\IX}{F_\sigma}\cong holim_{\IX} \tilde T\cong Map_{\IX}(\DX, T)\cong \Gamma(X,T). $$

\end{proof}

Our interest lies in  $\GLo A_\zeta $ and its localizations.
Given $A_\zeta = \Gamma (X, P \times _G B)$, consider the fibre bundle $ P \times _G \GLo B \longrightarrow X $.
 There is a  natural comparison map:
\[
c:\quad  (\GLo A_\zeta )_\PP  = {\Gamma (X, P \times _G \GLo B )} _{o\,\PP} \simeq \big[\Holim _{\DX} F_\sigma \big]_\PP  \longrightarrow \Holim _{\DX}\left( \big[F_\sigma \big]_\PP\right)
\]
where we used  Lemma \ref{sectionaslimit} to produce  the left maps and
where the last map on the right is naturally associated to the localization. Notice that on the left hand side one first takes localization and then homotopy limit.

In order to facilitate the formulation of the following   formula for the space of sections,
let  us denote by  $\Holim _{\DX} \big[F_\sigma \big]_{\PP, o}$  the component of  the   localization of the homotopy limit that
 is hit via the above comparison map $c$ by the component of the identity in the  localization of space of sections.
This is a shorthand for the cumbersome $\left(  \Holim _{\DX}\left( \big[F_\sigma \big]_\PP\right)  \right)_o.$

\begin{Thm}

 The map of connected spaces
\[
c:\quad  (\GLo A_\zeta )_\PP \longrightarrow    \Holim _{\DX} \big[F_\sigma \big]_{\PP,o}
\]
is a homotopy equivalence.
\begin{proof}  Recall that  for any connected  pointed space, if the action of its fundamental group on itself and all higher homotopy
groups is trivial then  the space is  certainly {\emph{nilpotent}}. Thus  the underlying space of any  connected topological group is nilpotent. Thus the diagram $F_\sigma$ in our case is in fact  a diagram of nilpotent spaces,
since the fibres are all equivalent  as spaces to the identity component $\GLo$  of our group. Now we use a theorem
about the localization of homotopy limits over certain diagrams  of nilpotent spaces over
finite dimensional indexing categories.  Here we use the assumption that $X$ is a finite simplicial complex.
By the main theorem in (\cite{F} Theorem 2.2 and Corollary. 2.3),  the comparison
 map has a  homotopically discrete fibre  (possibly empty!) over each component of the range, so that
  the map $c$ is,   up to homotopy, a covering projection.  Notice that  localization  in the range of the map $c,$ acts  one component at a time. Thus  the restriction of $c$ to each component of its domain is
a homotopy equivalence, in particular when one restricts to the component of the identity.

\end{proof}
\end{Thm}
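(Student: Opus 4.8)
The plan is to exhibit $c$ as a special case of the general comparison between $\PP$-localization and homotopy limits over a finite-dimensional indexing category, and then to extract the statement one path component at a time. First I would pin down the diagram $\sigma\mapsto F_\sigma$: each simplex $\sigma$ is contractible, so $P\times_G\GLo B$ restricts trivially over $\sigma$ and $F_\sigma\simeq\GLo B$, which is connected. Being the identity component of a topological group, $\GLo B$ is an H-space, so its fundamental group is abelian and acts trivially on every homotopy group; hence $\GLo B$, and therefore each $F_\sigma$, is nilpotent. Thus $\sigma\mapsto F_\sigma$ is a diagram of connected nilpotent spaces indexed by $\IX$.

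Next I would invoke the finiteness hypothesis. Since $X$ is a finite simplicial complex, the category $\IX$ of simplices and face inclusions has finite dimension $\dim X$, which is precisely the condition under which $\PP$-localization is controlled relative to $\Holim_\DX$. Applying the Bousfield--Kan fibre lemma together with \cite{F} (Theorem 2.2 and Corollary 2.3) to the diagram $(F_\sigma)$, the natural map
\[
\big[\Holim_\DX F_\sigma\big]_\PP \longrightarrow \Holim_\DX\big([F_\sigma]_\PP\big)
\]
is, up to homotopy, a covering projection: over each path component of the target its homotopy fibre is homotopically discrete, and possibly empty. Conceptually this is because $\Holim_\DX$ is computed as the inverse limit of a tower of principal fibrations arising from the skeletal filtration of $\IX$; $\PP$-localization commutes with each finite stage by fibrewise localization of nilpotent fibrations, and passage to the inverse limit of a tower of finite height contributes at most a $\lim^1$, hence $\pi_0$-level, discrepancy --- exactly the covering behaviour.

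Finally comes the component bookkeeping. Localization of a possibly disconnected nilpotent space is performed on each component separately, and $\GLo A_\zeta$ is connected, so $(\GLo A_\zeta)_\PP$ is a single path component, carried by $c$ into the distinguished component $\Holim_\DX[F_\sigma]_{\PP,o}$ by the very definition of the latter. The restriction of a homotopy covering projection to one connected component of its source, regarded as a covering of the corresponding component of the target and compatible with the component-wise localization on both sides, is forced to have a one-point fibre, hence is a homotopy equivalence; this gives the claim. The hard part is entirely imported: everything rests on the general localization--homotopy-limit comparison of \cite{F}, and the point one must not gloss over is that $\PP$-localization does \emph{not} commute with $\Holim$ in general. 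The finite dimensionality of $\IX$ and the nilpotence of the fibres are precisely what confine the discrepancy to a covering projection, and it is only after restricting to the identity component that this covering becomes trivial and one recovers an honest homotopy equivalence.
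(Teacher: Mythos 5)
Your argument is correct and takes essentially the same route as the paper: nilpotence of the fibres $F_\sigma$ (as connected topological groups/H-spaces), finite dimensionality of $\IX$ coming from $X$ being a finite complex, the appeal to \cite{F} (Theorem 2.2 and Corollary 2.3) to see that $c$ is, up to homotopy, a covering projection, and then the component-at-a-time argument to conclude an equivalence on the identity component. The only quibble is your parenthetical ``tower of principal fibrations plus $\lim^1$'' heuristic for where the covering discrepancy comes from, which is not quite the mechanism in \cite{F}; but it is not load-bearing, since like the paper you import the covering statement wholesale from that reference.
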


\begin{Cor} There is a natural isomorphism of homotopy groups of the following  connected spaces, taken with corresponding  base points:
\[
\pi _j(\GLo A_\zeta )\otimes\PP  \cong \pi _j  \big( \Holim _{\DX} \big[F_\sigma \big]_{\PP,o })\big)
\]
for all $j \geq 0$.
\end{Cor}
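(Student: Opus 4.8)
The plan is to deduce this immediately from the preceding Theorem together with the standard behavior of $\PP$-localization on homotopy groups of nilpotent spaces. First I would recall that $\GLo A_\zeta$, being the identity component of a topological group, is a connected nilpotent space (as noted in the proof of the Theorem), so its $\PP$-localization is defined and satisfies the fundamental property $\pi_j\big((\GLo A_\zeta)_\PP\big) \cong \pi_j(\GLo A_\zeta)\otimes\PP$ for all $j \geq 0$ — this is the defining property of localization for nilpotent spaces, valid even for $j = 1$ where the group need not be abelian, since localization of nilpotent groups is well behaved.

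Next I would apply the homotopy equivalence from the Theorem, namely
\[
c:\quad (\GLo A_\zeta )_\PP \;\xrightarrow{\;\simeq\;}\; \Holim _{\DX} \big[F_\sigma \big]_{\PP,o},
\]
which is an equivalence of connected spaces with matching base points. Passing to homotopy groups, $c_*$ gives isomorphisms $\pi_j\big((\GLo A_\zeta)_\PP\big) \xrightarrow{\cong} \pi_j\big(\Holim _{\DX} \big[F_\sigma \big]_{\PP,o}\big)$ for every $j$. Composing with the isomorphism of the previous paragraph yields the claimed natural isomorphism. Naturality in the relevant variables is inherited from the naturality of the comparison map $c$ and of the localization functor.

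There is essentially no obstacle here: the content has been absorbed into the Theorem, and all that remains is to translate the homotopy equivalence into a statement about homotopy groups using the elementary property of localization. The only point requiring a word of care is the case $j = 1$ (and the implicit identification of the relevant base-point component on the right, which is precisely how $\Holim _{\DX} \big[F_\sigma \big]_{\PP,o}$ was defined); but since $c$ restricts to a homotopy equivalence onto that component, no difficulty arises.
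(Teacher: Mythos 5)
Your argument is exactly the paper's (implicit) reasoning: the Corollary is an immediate consequence of the Theorem's homotopy equivalence $c$ together with the standard fact that $\PP$-localization of the nilpotent (indeed simple, being a connected topological group) space $\GLo A_\zeta$ satisfies $\pi_j\big((\GLo A_\zeta)_\PP\big)\cong \pi_j(\GLo A_\zeta)\otimes\PP$ for all $j\geq 0$. Your additional care about $j=1$ and the choice of component on the right is consistent with how the paper sets things up, so the proposal is correct and takes essentially the same approach.
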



\section{The Spectral Sequence for Finite Complexes}

In this section we construct the spectral sequence that is the main object of our work. Use the notation of
the previous section and  continue to assume that the base space $X$
is a finite simplicial complex.
The simplicial complex $X$ is filtered naturally by its skeleta and this induces a  filtration $\DD _pX $ of the diagram $\DX$.

 The spectral sequence is a special case of the Bousfield-Kan spectral sequence
for homotopy limits \cite{BK}.     Here we give a presentation of it for the space of sections over a finite simplicial complex.

Define
\[
\FF _pX  =   \Holim _{\DD _pX} \big[F_\sigma \big]_{\PP,o }   .
\]

Form the associated
exact couple.
The appropriate grading turns out to be
\[
D^1 _{-p,q}  \cong \pi _{q-p}(\FF _pX )\otimes \PP
\]
and
\[
 E^1 _{-p,q} = \pi _{q-p} ( (\FF _{p-1}X)/(\FF _{p}X)) \otimes \PP
 \]
 with differential
 \[
 d^1 : E^1_{-p,q} \longrightarrow E^1_{-p-1,q}.
 \]
 We may identify the $E^1$ term by noting that
 \[
 E^1_{-p,q} \cong \pi _{q-p} (\Gamma (\vee _\alpha  S^p, (\GLo A_\zeta )|_{S^p} ) \otimes \PP
  \cong \pi _{q-p} (F_*(\vee _\alpha  S^p , \GLo B ))\otimes \PP  \cong
 \]
 \[
 \cong \oplus _\alpha \pi _{q-p}(\Omega ^p \GLo B)\otimes \PP  \cong \oplus _\alpha \pi _q(\GLo B) \otimes \PP \cong
  C^p(X ; \pi _q(\GLo B)\otimes \PP).
  \]
  so that
  \[
    E^1_{-p,q} \cong    C^p(X ; \pi _q(\GLo B)\otimes \PP),
  \]
  the cellular cochains of $X$ with coefficients in $\pi _q(\GLo B)\otimes \PP$.
  The $d^1$ differential is  the usual cellular differential  and so
   \[
  E^2 _{-p,q} \cong \check{H}^p(X ; \pi _q(\GLo B)\otimes \PP).
  \]
  The filtration is finite since $X$ is a finite complex, and so the resulting spectral sequence  converges to $\pi_* (\GLo A)\otimes \PP $.
   This proves Theorem A   for the case $X$ a finite simplicial complex.

This construction has two types of naturality associated with it.

\begin{Pro}\label{Cor:naturality}
\begin{enumerate}
\item
The exact couple and resulting spectral sequence are natural with respect to pullbacks
\[
\begin{CD}
f^*(E)  @>>> E  \\
@VVV @VVV  \\
X' @>f>>  X
\end{CD}
\]
where $f: X' \to X $ is a simplicial map of finite simplicial complexes and $f^*(E) \to X'$ is the pullback bundle.
\item The spectral sequence is natural with respect to bundle maps
\[
\begin{CD}
E @>>>   E'  \\
@VVV  @VVV  \\
X @>1>>   X
\end{CD}
\]
induced by a $G$-equivariant  map $B \to B'$ on the fibres.
\end{enumerate}
\end{Pro}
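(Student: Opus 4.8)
The plan is to observe that the entire construction of Section 4 --- the filtered diagram, its homotopy limits, the exact couple, and the identification of the $E^1$-term with cellular cochains --- is carried out by functors, so that both naturality statements reduce to producing the appropriate maps of \emph{filtered diagrams of spaces} and then applying the construction. Recall that $\FF_p X = \Holim_{\DD_p X}[F_\sigma]_{\PP, o}$, where the homotopy limit is computed by the Bousfield--Kan cosimplicial-replacement functor \cite{BK}; this functor is strictly functorial in the diagram, and for a functor $u : \mathcal{J} \to \mathcal{I}$ of indexing categories it comes equipped with a natural restriction map $\Holim_{\mathcal{I}} W \to \Holim_{\mathcal{J}}(W \circ u)$. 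The structure maps $\FF_p X \to \FF_{p-1} X$ of the tower underlying the exact couple are themselves instances of this restriction map, for the full inclusions $\DD_{p-1} X \hookrightarrow \DD_p X$.

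For the pullback statement, a simplicial map $f : X' \to X$ sends each $p$-simplex of $X'$ to a simplex of $X$ of dimension $\leq p$, hence induces a functor $u_f : \IX' \to \IX$ with $u_f(\DD_p X') \subseteq \DD_p X$. The fibre of $f^*E \to X'$ over a simplex $\sigma'$ is naturally equivalent to $F_{f(\sigma')}$ (a homeomorphism when $f$ is injective on $\sigma'$, a homotopy equivalence in general), so the diagram $\widetilde{f^*E}$ over $\IX'$ is objectwise equivalent to $\tilde E \circ u_f$. Localizing, restricting to the distinguished components, and applying the restriction maps for $u_f$ produces a map of towers $\FF_p X \to \FF_p X'$ compatible with the structure maps, hence a morphism of exact couples and of spectral sequences. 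Tracing this through the identifications of Section 4, on $E^1$ it is the simplicial cochain map $f^\# : C^p(X; \pi_q(\GLo B)\otimes\PP) \to C^p(X'; \pi_q(\GLo B)\otimes\PP)$, on $E^2$ it is $f^* : \check H^p(X; \pi_q(\GLo B)\otimes\PP) \to \check H^p(X'; \pi_q(\GLo B)\otimes\PP)$, and on abutments it is the map $\pi_*(\GLo A_\zeta)\otimes\PP \to \pi_*(\GLo A_{f^*\zeta})\otimes\PP$ induced by $f^* : A_\zeta \to A_{f^*\zeta}$.

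For the second statement, a $G$-equivariant homomorphism $\alpha : B \to B'$ carries invertibles to invertibles (via unitizations in the non-unital case, as in Section 2) and the identity to the identity, hence induces a $G$-equivariant continuous homomorphism $\GLo B \to \GLo B'$ and a bundle map $P \times_G \GLo B \to P \times_G \GLo B'$ over $X$. This is a morphism of diagrams $F_\sigma \to F'_\sigma$ over the \emph{same} category $\IX$, automatically compatible with the skeletal filtration of $X$; since it preserves identity sections it descends, after localization, to a map between the distinguished components, and $\Holim_{\DD_p X}$ then yields the required map of towers, exact couples, and spectral sequences. On $E^1 \cong C^p(X;\pi_q(\GLo B)\otimes\PP)$ it is induced by $\alpha_* : \pi_q(\GLo B) \to \pi_q(\GLo B')$, hence on $E^2$ it is $\check H^p(X; \alpha_*)$, converging to the map induced by $\alpha$ on section algebras.

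The one step that genuinely needs care --- everything else being formal functoriality of cosimplicial replacement, localization, and homotopy groups --- is the bookkeeping of \emph{path components}: the spaces $\FF_p X$ were defined as \emph{distinguished} components of localized homotopy limits, and localization acts one component at a time, so one must verify that each of the maps constructed above carries the distinguished component to the distinguished component. For $\alpha$ this is immediate from preservation of the identity section; for $f$ it follows because the pullback of the identity section is the identity section, combined with the fact (established in Section 3, via \cite{F}) that the comparison maps $c$ are, up to homotopy, covering projections, so that components are detected on $\pi_0$ and the identifications are functorial. Granting this, compatibility with the identifications of the $E^1$- and $E^2$-terms and with the abutment is a direct diagram chase through the construction.
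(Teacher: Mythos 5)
Your proposal is correct and takes essentially the same route as the paper: the paper's entire proof is the remark that both assertions are immediate from the construction, and your argument simply spells out that functoriality (restriction along the functor $\IX' \to \IX$ induced by a simplicial map, a map of diagrams over $\IX$ induced by a $G$-equivariant $\alpha : B \to B'$, together with the bookkeeping of distinguished components). The one point you treat loosely is the direction of the comparison between $\tilde{E}\circ u_f$ and $\widetilde{f^*E}$ --- the natural objectwise equivalence runs from the pullback fibres to $F_{f(\sigma')}$, so to get an honest map of towers $\FF_pX \to \FF_pX'$ one either inverts it through a zigzag of filtration-preserving levelwise equivalences or uses the strict pullback of coherent families of sections --- but this is a standard adjustment and does not affect the argument.
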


\begin{proof} Both of these assertions are immediate from the construction.
\end{proof}

\section{The Spectral Sequence for Compact Spaces}

We move now to the general case, with   finite-dimensional compact metric space  $X$ and principal $G$-bundle
 $\zeta : P \to X$.  The group $G$ acts on the Banach algebra $B$ and may be regarded as a subgroup of $Aut(B)$, but this is a very large group
 in general.

 Principal $G$-bundles are classified by maps $X \to BG$ and so there is a pullback diagram
 \[
 \begin{CD}
 P \times _G B @>>>     EG \times _G B  \\
 @VVV   @VVV   \\
 X @>f>>  BG
 \end{CD}
 \]
 but it is critical at this point that the space $BG$
be of the homotopy type (and not just the weak homotopy type) of a CW complex.

\begin{Def} The fibre bundle $  E = P \times _G B \to X$ is {\emph{standard}} if either $X$ is of the homotopy type
of a  finite complex or (if not) then
 the structural group of the bundle reduces to some subgroup $G \subset Aut(B) $ which is of the
 homotopy type of a $CW$-complex, or equivalently, that
  the classifying space $BG$  of the bundle  has the
   homotopy  type
of a CW complex.\footnote{
We note that non-standard bundles do exist. Take some topological group $G$ that is not of the homotopy type of a $CW$-complex. Then $BG$ also has this property
and hence the universal principal $G$-bundle is not standard.}
\end{Def}

The classical theorem of Eilenberg-Steenrod (\cite{ES} Theorem. X.10.1) gives us an inverse sequence of simplicial complexes $X_j$
and simplicial maps together with a homeomorphism $X \cong \Invlim X_j$. Let  $h_j : X \to X_j$ denote the resulting
structure maps.
Then, as shown in \cite{KSS},  for $j$ sufficiently large, there are maps\footnote{The result in \cite{KSS} depends
upon work of Eilenberg-Steenrod which requires that the target space $BG$ be of the homotopy type of a CW-complex. This is why  standard bundles are required.}
\[
f_j : X_j \to BG
\]
 and   a homotopy-commuting
  diagram of the form
\begin{equation}\label{pullback}
\begin{CD}
\GLo E  @>>>      \GLo E_j   @>>>   \GLo E_{j-1}   @>>>     EG \times _G \GLo B  \\
@VVV @VV{\zeta _j}V   @VVV  @VVV     \\
X @>{h_j}>>   X_j  @>>> X_{j-1}   @>{f_{j-1}}>>  BG.
\end{CD}
\end{equation}
where each square is a pullback.
Let $\zeta _j : \GLo E_j \to X $ denote the associated bundles. Then there is a natural isomorphism
\[
\Dirlim \pi _*(\Gamma (\zeta _j ) ) \overset{\cong}\longrightarrow \pi _*(\Gamma (\zeta )) \equiv \pi _*(\GLo A_\zeta ) .
\]
For each finite simplicial complex  $X_j$  there is an exact couple
\[
(E^1_{*,*} (j), D^1_{*,*}(j))
\]
constructed  in Section 4.  The maps $X_j \to X_{j-1} $  are simplicial and hence induce morphisms of exact couples by Corollary
 \ref{Cor:naturality}.
Define
\[
  D^1_{*,*} = \Dirlim _j                 D^1_{*,*}(j))
\]
and
\[
 E^1_{*,*}  = \Dirlim _j   (E^1_{*,*} (j))
\]
This forms an exact couple, since filtered direct limits of exact sequences are exact.
The resulting spectral sequence has
\[
E^2 \cong   \Dirlim E^2(j) \cong   \Dirlim  \check{H}^*(X_j ; \pi _*(\GLo B) \otimes \PP )
\cong
 \check{H}^*(X; \pi _*(\GLo B) \otimes \PP )
\]
and converges to
\[
\Dirlim \pi _*(\Gamma (\zeta _j ) ) \otimes \PP   \cong  \pi _*(\GLo A_\zeta )\otimes \PP .
\]
as required. (Note that \v{C}ech cohomology is required at this point since that is the cohomology  theory that respects inverse
limits of compact spaces.)

We turn next to the naturality claims.

Suppose given a pullback diagram
\[
  \begin{CD}
  f^*E @>>>   E  \\
  @VVV  @VVV    \\
   X' @>f>>   X.
 \end{CD}
 \]
Then  use the diagram (\ref{pullback}) to construct the spectral sequence for $A_\zeta = \Gamma (X, E) $.
Write $X = \Invlim X_j $ and $X' = \Invlim X_j'$.
We recall that  the natural maps
\[
[X' , X] \longrightarrow [X', X_j ]
\]
induce a bijection
\[
[X' , X] \overset{\cong}{\longrightarrow} \Invlim _j  [X', X_j ]
\]
by the universal property of inverse limits. Since $X_j$ is a finite complex,   appeal to \cite{KSS} Proposition 6.2
to yield  a natural bijection
\[
\Dirlim _i [X_i' , X_j ]  \overset{\cong}{\longrightarrow}    [X', X_j ].
\]
Combining these,  there is a bijection
\[
[X' , X] \overset{\cong}{\longrightarrow} \Invlim _j \,\, [X', X_j ]   \overset{\cong}{\longleftarrow}     \Invlim _j   \Dirlim _i \,\,[X_i' , X_j ]   .
 \]

Suppose    given a bundle $E \to X$ as usual, with associated diagram
\begin{equation}\label{pullback2}
\begin{CD}
\GLo E  @>>>      \GLo E_j   @>>>   \GLo E_{j-1}   @>>>     EG \times _G \GLo B  \\
@VVV @VV{\zeta _j}V   @VVV  @VVV     \\
X @>{h_j}>>   X_j  @>>> X_{j-1}   @>>>  BG
\end{CD}
\end{equation}
and with each square a pullback. Passing to cofinal subsequences and
renumbering,   expand  diagram (\ref{pullback2})  to
\begin{equation}\label{pullback3}
\xymatrix{
& \GLo E  \ar[r]\ar[d]& \GLo E_j  \ar[r]\ar[d]^{\zeta _j}& \GLo E_{j-1}
\ar[r]\ar[d] &   EG \times _G \GLo B  \ar[d]\\
& X \ar[r]^{h_j} &   X_j  \ar[r] & X_{j-1} \ar[r]&  BG  \\
X' \ar[ur]^f \ar[r]^{h'_j} &   X'_j  \ar[ur]^{f_j}\ar[r] & X'_{j-1}
\ar[ur]^{f_{j-1}}& &
}
\end{equation}
and define bundles $f^*\zeta _j$  over each $X_j'$ via  pullback diagrams
\[
\begin{CD}
f_j^*E_j @>>>   E_j  \\
@VVV   @VVV  \\
X_j'  @>>f_j>  X_j .
\end{CD}
\]
Proposition \ref{Cor:naturality} then  tells us that there is a morphism of spectral sequences
\[
E_{*,*}^*(A_{\zeta _j})  \longrightarrow E_{*,*}^*(A_{f^*\zeta _j})
\]
and taking direct limits of both sides there is  a morphism of spectral sequences
\[
E_{*,*}^*(A_{\zeta} ) \longrightarrow E_{*,*}^*(A_{f^*\zeta })
\]
as desired.   This completes the proof of the naturality of the spectral sequence with
respect to pullbacks of bundles.

 Suppose that $\alpha : B \to B'$ is a map of   Banach algebras over $\FFF $  that respects the
group action $G$ with corresponding bundles $E$ and $E'$ over $X$. Then there is
a natural bundle map
\[
\begin{CD}
EG \times _G \GLo B   @>>>     EG \times _G \GLo B'  \\
@VVV   @VVV   \\
X  @>{Id}>>   X
\end{CD}
\]
and a coherent three-dimensional diagram showing the naturality of diagram (\ref{pullback2}) with respect to $\alpha $.
It is then routine to show that the various subsequent constructions carry this naturality along, essentially regarding
$B$ as coefficients.

 This completes the proof of Theorem  \ref{T:theoremA}.


\section{Comparison Theorems}

The spectral sequence of Theorem  \ref{T:theoremA}
 is natural in several senses and these naturality results bring with them corresponding comparison theorems. These are routine consequences of the Comparison Theorem for spectral
sequences.\footnote{It turns out to be remarkably difficult to give proper credit for spectral sequence comparison theorems.
Mac Lane \cite{MacLane} Theorem 11.1 is a fine reference, but he gives credit to J.C. Moore \cite{Moore}  in the Cartan
Seminar 1954--5 (now available on line!) who gives credit to a paper of Eilenberg and Mac Lane. McCleary \cite{McCleary}  gives credit to
E. C. Zeeman, who credits Moore for the result that we use, more or less.  None of
these are exactly the theorem that we use here, but {\emph{hamaivin yavin}} (a phrase found in Kabbalistic literature meaning \lq\lq those who understand will understand.\rq\rq )   }

\begin{Thm} Suppose that $X$ is a compact space,  $B$ and $B'$ are Banach algebras over $\FFF $, and
that
 \[
 \zeta :  E \to X \qquad and\qquad  \zeta '  :  E' \to X
 \]
 are   two standard bundles of Banach algebras
  with fibres $B$ and $B'$ respectively. Suppose given   a map of Banach bundles $f:E \to E'$.
  Then there is an induced natural map of Theorem  \ref{T:theoremA} spectral sequences which induces the natural map on the
$E_2$ and $E_\infty $ levels.  If the induced map
\[
f_* : \pi  _*(\GLo B)\otimes\PP  \to \pi _*(\GLo B')\otimes\PP
\]
is an isomorphism, then the induced map
\[
  f_* : \pi _*(\GLo A_\zeta )\otimes\PP \to \pi _*(\GLo A_{\zeta '})\otimes\PP
\]
is an isomorphism.
\end{Thm}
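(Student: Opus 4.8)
The plan is to deduce this from the naturality of the Theorem~\ref{T:theoremA} spectral sequence (established in the previous section) together with the Comparison Theorem for spectral sequences. The map of Banach bundles $f: E \to E'$ over $X$ induces, by the naturality with respect to $G$-equivariant maps of fibres already proved, a morphism of the two spectral sequences
\[
E^*_{*,*}(A_\zeta) \longrightarrow E^*_{*,*}(A_{\zeta'}),
\]
compatible with the identification of the $E^2$-terms with \v Cech cohomology and compatible with the map on the $E^\infty$-terms induced by $f_*: \pi_*(\GLo A_\zeta) \to \pi_*(\GLo A_{\zeta'})$ on the associated graded of the (finite) filtration.

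The key steps, in order, are: (1) Recall that on $E^2$ the morphism of spectral sequences is exactly
\[
\check{H}^p(X; \pi_q(\GLo B)\otimes\PP) \longrightarrow \check{H}^p(X; \pi_q(\GLo B')\otimes\PP),
\]
induced by the coefficient homomorphism $f_*: \pi_q(\GLo B)\otimes\PP \to \pi_q(\GLo B')\otimes\PP$. (2) Observe that if $f_*$ is an isomorphism of coefficient groups in every degree, then the induced map on \v Cech cohomology is an isomorphism in every bidegree; hence the morphism is an isomorphism on all of $E^2$. (3) Apply the Comparison Theorem: an isomorphism on $E^2$ propagates through the differentials and hence is an isomorphism on $E^r$ for all $r$, and in particular on $E^\infty$. (4) Since $X$ is finite-dimensional the spectral sequences have finite filtration length (by part~(2) of Theorem~\ref{T:theoremA}, for each fixed total degree only finitely many filtration pieces are nonzero), so the associated graded determines the abutment up to extension; an isomorphism on $E^\infty$ in each bidegree therefore forces an isomorphism on the filtration quotients of $\pi_*(\GLo A_\zeta)\otimes\PP$, and a five-lemma induction up the (finite) filtration gives that $f_*: \pi_*(\GLo A_\zeta)\otimes\PP \to \pi_*(\GLo A_{\zeta'})\otimes\PP$ is an isomorphism.

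I expect the main obstacle to be a bookkeeping issue rather than a conceptual one: making sure the convergence is strong enough that "isomorphism on $E^\infty$" really implies "isomorphism on the abutment." In a second-quadrant spectral sequence one must check that the filtration of $\pi_n(\GLo A_\zeta)\otimes\PP$ is finite and exhaustive and Hausdorff; this is exactly what the finite-dimensionality of $X$ guarantees (combined with the fact that the filtration comes from the skeletal filtration of the finite-dimensional base, possibly after passing to the inverse limit of finite complexes as in Section~5). Once that is in hand, the five-lemma argument up the finitely many filtration steps is routine. A minor additional point is that for the non-unital case one should either work with $A_{\zeta^+}$ and invoke the splitting of Section~2, or simply note that $\GLo$ is unaffected in the relevant degrees; either way this does not affect the argument.
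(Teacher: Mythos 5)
Your proposal is correct and follows essentially the same route as the paper: use the naturality of the Theorem~\ref{T:theoremA} spectral sequence with respect to a $G$-equivariant map of fibres to get a morphism of spectral sequences, observe that the hypothesis makes it an isomorphism on $E^2$, and then invoke the standard comparison theorem together with the finiteness of the filtration (from the finite-dimensionality of $X$) to conclude the map on the abutment is an isomorphism. Your extra attention to strong convergence and the five-lemma induction up the filtration simply spells out details the paper leaves implicit.
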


\begin{proof}  The fact that the first induced map  is an isomorphism implies that the map of spectral sequences
is an  isomorphism at the $E^2$  level and \lq\lq the standard comparison theorem\rq\rq\, implies that the
map of spectral sequences
is an  isomorphism at the $E^\infty $ level, which implies that the second map is an isomorphism.
\end{proof}

\begin{Thm}
Suppose that $X$ and $X'$ are finite dimensional compact metric spaces.
 Suppose given  a Banach algebra $B$ over $\FFF$, a standard bundle
 $
 \zeta :  E' \to X'
 $
 with  fibre $B$
 and a continuous function $f: X \to X'$.
 Let $\zeta '  : E \to X$ denote the pullback bundle.
 This induces a natural map of spectral sequence with the natural maps
  on
$E_2$ and $E_\infty $.  If the induced map
\[
f^*:  \check{H}^*(X' ; \PP ) \to \check{H}^*(X ; \PP )
\]
is an isomorphism, then the induced map
\[
f^* : \pi _*(\GLo A_\zeta ' )\otimes\PP \to \pi _*(\GLo A_{\zeta })\otimes\PP
\]
is an isomorphism.
\end{Thm}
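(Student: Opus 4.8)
The plan is to run exactly the argument of the previous theorem --- induced morphism of spectral sequences, then the Comparison Theorem --- after inserting one extra, change-of-coefficients, step. By Theorem~\ref{T:theoremA}(3) the pullback along $f$ induces a morphism from the spectral sequence converging to $\pi_*(\GLo A_\zeta)\otimes\PP$ (the one attached to $E'\to X'$) to the spectral sequence converging to $\pi_*(\GLo A_{\zeta'})\otimes\PP$ (the one attached to the pullback $E\to X$, induced by $f^*\colon A_\zeta\to A_{\zeta'}$); on $E^2$ this morphism is the map $f^*\colon\check{H}^p(X';\pi_q(\GLo B)\otimes\PP)\to\check{H}^p(X;\pi_q(\GLo B)\otimes\PP)$. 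Since $X$ and $X'$ are finite dimensional, both spectral sequences occupy only finitely many columns and converge strongly (Theorem~\ref{T:theoremA}(2)), so the Comparison Theorem reduces the assertion to showing that this $E^2$-map is an isomorphism. Hence it suffices to prove that $f^*\colon\check{H}^p(X';M)\to\check{H}^p(X;M)$ is an isomorphism for every $p$ and every $M$ of the form $\pi_q(\GLo B)\otimes\PP$ --- an arbitrary module over the ring $\PP$.

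The crux is therefore the following change-of-coefficients lemma: if $f\colon X\to X'$ is a map of finite-dimensional compact metric spaces with $f^*\colon\check{H}^*(X';\PP)\to\check{H}^*(X;\PP)$ an isomorphism, then $f^*\colon\check{H}^*(X';M)\to\check{H}^*(X;M)$ is an isomorphism for every $\PP$-module $M$. To prove it I would first observe that, for a finite-dimensional compact metric space $Y$, reduced \v{C}ech cohomology with $\PP$-module coefficients is computed --- naturally in $Y$ --- by a cochain complex of flat $\PP$-modules: writing $Y=\Invlim_i Y_i$ as an inverse limit of finite simplicial complexes (Eilenberg-Steenrod, as in Section~5) and using continuity of \v{C}ech cohomology, $\check{H}^*(Y;M)$ is the cohomology of $(\Dirlim_i C^*(Y_i;\PP))\otimes_\PP M$, and $\Dirlim_i C^*(Y_i;\PP)$ --- a filtered colimit of finitely generated free $\PP$-modules --- is flat over $\PP$. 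From this, two facts follow formally and naturally in $Y$: \v{C}ech cohomology of such spaces commutes with arbitrary direct sums of the coefficient module, and a short exact sequence of coefficient modules induces a long exact sequence in \v{C}ech cohomology.

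Granting these, the lemma is immediate. Since $\PP$ is a subring of $\QQ$ it is a principal ideal domain, so $M$ admits a two-term free resolution $0\to F_1\to F_0\to M\to 0$ with $F_0$ and $F_1$ free (a submodule of a free module over a PID is free). By the additivity fact, $f^*$ is an isomorphism on $\check{H}^*(-;F_0)$ and on $\check{H}^*(-;F_1)$, being in each case a direct sum of copies of $f^*$ on $\check{H}^*(-;\PP)$. By the long-exactness fact, applying the long exact sequences of the resolution over $X$ and over $X'$ and comparing them via $f^*$, the Five Lemma forces $f^*$ to be an isomorphism on $\check{H}^*(-;M)$. Taking $M=\pi_q(\GLo B)\otimes\PP$ for all $q$ then completes the proof of the theorem.

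I expect the main obstacle to be precisely this change-of-coefficients lemma, and within it the two point-set facts about \v{C}ech cohomology of finite-dimensional compact metric spaces (additivity in the coefficient module, and long exactness for a short exact sequence of coefficients). For finite complexes both are standard; the work is in pushing them through the inverse-limit representation of such spaces together with the exactness of filtered colimits --- the same mechanism, and the same reason one must use \v{C}ech rather than singular cohomology, that already governs Section~5 and the limit arguments of \cite{KSS}.
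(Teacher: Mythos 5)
Your proposal is correct and follows the same route as the paper: the pullback naturality of Theorem~\ref{T:theoremA} gives a morphism of spectral sequences, finite dimensionality gives strong convergence, and the standard Comparison Theorem reduces everything to an isomorphism on $E^2$. The only difference is that you spell out the change-of-coefficients step (passing from an isomorphism on $\check{H}^*(-;\PP)$ to one on $\check{H}^*(-;\pi_q(\GLo B)\otimes\PP)$ via flat cochain models, a free resolution over the PID $\PP$, and the Five Lemma), which the paper's proof asserts implicitly without detail; your argument for that lemma is sound.
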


\begin{proof} The fact that the map (1) is an isomorphism implies that the map of spectral sequences
is an  isomorphism at the $E^2$  level and \lq\lq the standard comparison theorem\rq\rq\, implies that the
map of spectral sequences
is an  isomorphism at the $E^\infty $ level, which implies that (2) is an isomorphism.
\end{proof}

\section{The Stabilization Process}

Suppose   $\zeta : E = P \times _G B  \to X $ is a  bundle of Banach algebras over $\FFF$.
  Let $G$ act trivially on $M_n(\FFF  )$.  Form a new bundle
\[
\zeta _n : P \times _G (B\otimes M_n(\FFF  )) \to X
\]
(cf. \cite{MS} page 32)  over $X$;
the space of sections of this bundle is denoted
\[
A_{\zeta _n}      = \Gamma (X, P \times _G (B\otimes M_n(\FFF )) )  \cong A_\zeta  \otimes M_n(\FFF  ) .
\]
    It is elementary, then, to see that $\GL _nA_\zeta \equiv \GLo (A_{\zeta _n})$ consists of sections
\[
s: X \to E\otimes M_n(\FFF )
\]
  with the property that $s(x) \in \GL _n(E_x)$ for all $x$ and that the path component of
 the identity consists of  those sections connected by
a path of such sections to the identity section.

For any Banach algebra $A$ over $\FFF $ , there is a natural \lq\lq upper left corner\rq\rq\,
 homomorphism $\GL _nA \to \GL _{n+1}A $. We may form the associated homotopy groups and calculate
$  \Dirlim _n \pi _*(\GL _nA)$.  Bott periodicity \cite{Bott} and its extension to Banach algebras by  Wood \cite{Wood} and Karoubi \cite{Kar}
implies that these groups are periodic of period $2$ (if $\FFF=\CC$) or
$8$ (if $\FFF=\RR$) and
 \[
 \Dirlim _n \pi _*(\GL _nA) \overset\cong\longrightarrow \KF_{*+1}(A) .
\]
Note that the left hand side is $\ZZ $-graded, with $\pi _*  = 0$ for
$* < 0$. The right hand side is typically seen as $\ZZ/8 $-graded in
the real case and  $\ZZ/2 $-graded in the complex case
in recent literature,
but it is better to think of $\KF_j(A)$  as defined for   $j \geq 0$,\footnote{J. F. Adams \cite{Adams} page 619
notes that the Adams operations do not commute with periodicity and for that reason warns the reader \lq\lq We shall be most careful not to identify $K_\CC ^{-n-2}(X,Y)$  with
$K_\CC ^{-n}(X,Y)$ or $K_\RR ^{-n-8}(X,Y)$  with
$K_\RR ^{-n}(X,Y)$. We therefore regard $K_\Lambda (X,Y)$ as graded over $\ZZ$, not over $\ZZ _2 $ or $\ZZ _8$.\rq\rq }
by the results of Bott, Wood, and Karoubi.

We want to study the passage to the limit in the last isomorphism.
  In general the matter can be  complicated. For example, in the case $A = \CC $ the
groups $\pi _*(\GL _n\CC )$ have  vast amounts of torsion, but the limit group   $\Dirlim _n \pi _*(\GL _n\CC) \cong K_{*+1}(\CC) $
is simply $\ZZ $ or $0$ depending upon the parity of $*$.

Fix some standard  bundle $\zeta : E = P \times _G B  \to X$ of    $B$-algebras over $\FFF $  and associated
Banach algebra $A_\zeta $.
 Following the procedure outlined
above,  produce the sequence of Banach algebras $A_{\zeta _n}$.
Then there is a  spectral sequence
 \[
 E^2_{-p,q}(A_\zeta)   \cong \check{H}^p(X ; \pi _q(\GLo B)\otimes \PP )   \Longrightarrow    \pi _*(\GLo A_\zeta )\otimes \PP
 \]
 and, for each $n$, a spectral sequence
 \[
 E^2_{-p,q}(A_{\zeta_n})   \cong \check{H}^p(X ; \pi _q(\GL _nB)\otimes \PP )   \Longrightarrow  \pi _*(\GLo A_{\zeta_n} )\otimes \PP  .
 \]
The natural maps $\GLo B \to \GL _nB  \to \GL _{n+1}B$ induce morphisms of spectral sequences
\[
E^r_{*,*}(A_\zeta)     \longrightarrow
E^r_{*,*}(A_{\zeta_n})  \longrightarrow
E^r_{*,*}(A_{\zeta_{n+1}}) .
\]
The direct limit of spectral sequences is again a spectral sequence, since exactness is preserved under filtered direct
limits, and hence   form yet another spectral sequence
 \[
E^r_{*,*}(A_{\zeta _\infty }     )     \equiv    \Dirlim _n  E^r_{*,*}(A_{\zeta_n})  .
\]
(This is an abuse of notation since there is no bundle $\zeta _\infty$.)
The $E^2$ term of this spectral sequence is readily identified:
\[
E^2_{-p,q}(A_{\zeta _\infty }     )   \equiv    \Dirlim _n  E^2_{-p,q}(A_{\zeta_n}) = \Dirlim _n   \check{H}^p(X ; \pi _q(\GL _nB)\otimes \PP )  \cong
\]
\[
\cong \check{H}^p(X ; \Dirlim _n  \pi _q(\GL _nB)\otimes \PP )
\cong \check{H}^p(X ; \KF_{q+1}(B)\otimes\PP  ).
\]
Similarly, the spectral sequence converges to
\[
\Dirlim _n  \pi_*(\GL _nA_\zeta )\otimes\PP \cong \KF_{*+1}(A_\zeta )\otimes\PP .
\]
This completes the proof of Theorem \ref{T:theoremB} .
\vglue .2in

 Let $B$ be a    Banach algebra over $\FFF$. Recall   that   $B$ is {\emph{Bott-stable}} if the natural map
 \[
 \pi_*(\GLo B) \to \KF_{*+1}(B)
 \]
   is an isomorphism for every $* > 0$.   We note in the Introduction that many important
    operator algebras have this property.  Similarly,  call $B$ {\emph{rationally Bott-stable}} if the natural map
 \[
 \pi_*(\GLo B)\otimes \QQ  \to \KF_{*+1}(B)\otimes\QQ
 \]
   is an isomorphism.

Here is the proof of Theorem \ref{T:theoremC}.

\begin{proof}
We prove the first statement, the second being similar.
The natural map $\GLo B \to \GL _nB$  induces  a morphism of spectral sequences $\Phi _*$
from the spectral sequence
 \[
 E^2_{-p,q}(A_\zeta)   \cong \check{H}^p(X ; \pi _q(\GLo B)  )   \Longrightarrow    \pi _*(\GLo A_\zeta )
 \]
to the spectral sequence
 \[
 E^2_{-p,q}(A_{\zeta_\infty}  )   \cong \check{H}^p(X ; \KF _{q+1}(B)  )   \Longrightarrow    \KF _{*+1}(A_\zeta ) .
 \]
At the $E^2$ level the map $\Phi _2 $ is an isomorphism, because $B$ is Bott-stable. The standard comparison theorem implies
that  $\Phi _n$ is an isomorphism for each $n$ and hence the map
\[
\Phi _\infty :
\pi _*(\GLo A_\zeta )  \longrightarrow
\KF _{*+1}(A_\zeta )
\]
is an isomorphism. This is just the statement that $A_\zeta $ is Bott-stable.
\end{proof}

Note that the result does not imply anything about  differentials in the spectral sequence; there is no
reason to assume or conclude that $E^2 = E^\infty $.

\section{Collapse Theorems}

Recall that
an {\it $H$-space structure} on a based space $(X, \ast )$ is a map
$m\:X\times X\to X$ whose restriction to $X\times \ast$ and $\ast \times X$
is homotopic to the identity as based maps.
If an $H$-space structure on $X$ is understood, we call $X$ an {\it $H$-space.}
One says that $X$ is {\it homotopy associative} if the maps
$m \circ (m\times \text{id})$ and $m  \circ (\text{id}\times m)$ are homotopic.
A {\it homotopy inverse} for $X$ is a map $\iota\: X \to X$ such that the composites
$m \circ (\iota \times \text{id})$ and $m \circ (\text{id} \times \iota)$ are homotopic
to the identity.

\begin{Def} An $H$-space $X$  is  {\it group-like} if its multiplication is
  homotopy associative
and  has a homotopy inverse.
\end{Def}

 If $X$
is a group-like $H$-space  then the set of path components $\pi_0(X)$ acquires a group structure.

\begin{Thm}
Suppose that $X$ is a finite dimensional compact metric space, and suppose given a fibre bundle $\zeta : E = P \times _G B \to X $ of Banach algebras over $\CC $.
   Assume further that the
 classifying space of $\GLo B$ has the rational homotopy type of a group-like $H$-space. (This condition holds, for instance,
 if $\GLo B$ has rationally periodic homotopy groups.) Then
 \begin{enumerate}
 \item
 $\GLo A_\zeta $ is rationally $H$-equivalent to the function space  $F(X, \GLo B)$.
 \item   There is an isomorphism
  \[
 \pi_*(\GLo A_\zeta )\otimes\QQ \cong \check{H}^*(X ; \QQ ) \otimes \pi _*(\GLo B)\otimes\QQ  .
\]
 \item
 The spectral sequence of
  Theorem \ref{T:theoremA}  has $E^2 = E^\infty $.
\end{enumerate}
\end{Thm}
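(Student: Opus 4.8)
The plan is to deduce all three statements from a single rational equivalence, built by comparing the section functor against a function space via the comparison map $c$ of Section 3. The key observation is that the hypothesis—$B\GLo B$ has the rational homotopy type of a group-like $H$-space—is exactly what is needed to make the homotopy-limit description of $\GLo A_\zeta$ collapse onto a product.

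First I would reduce to the case where $X$ is a finite simplicial complex, since by the limit arguments of Section 5 all three assertions pass to inverse limits of finite complexes (the $E^2$-term and the convergence are compatible with $\Dirlim_j$, and \v{C}ech cohomology respects the limit). So assume $X$ is a finite complex. By Lemma \ref{sectionaslimit} and the theorem of Section 3 we have a rational homotopy equivalence $(\GLo A_\zeta)_\QQ \simeq \Holim_{\DX}[F_\sigma]_{\QQ,o}$, where each $F_\sigma \simeq \GLo B$. Now the point is that $\GLo B \simeq \Omega(B\GLo B)$, and under the hypothesis $B\GLo B$ is rationally a group-like $H$-space, hence rationally a product of Eilenberg--MacLane spaces; consequently $\GLo B$ itself is rationally a product of Eilenberg--MacLane spaces, i.e. rationally a generalized Eilenberg--MacLane space (GEM). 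For a GEM fibre, the bundle $P\times_G \GLo B \to X$ is rationally fibre-homotopy trivial: the structure group acts through automorphisms of a rational GEM, and such automorphisms of $\prod K(\pi_q(\GLo B)\otimes\QQ, q)$ are detected on homotopy groups, where the $G$-action is trivial because the $E^2$-term $\check H^p(X;\pi_q(\GLo B)\otimes\QQ)$ uses untwisted (constant) coefficients—this untwisting is already built into the construction of the spectral sequence in Section 4. Therefore, rationally, $P\times_G\GLo B \simeq X\times\GLo B$, and taking sections gives the rational $H$-equivalence
\[
\GLo A_\zeta \;\simeq_\QQ\; \Gamma(X, X\times \GLo B) \;=\; F(X,\GLo B),
\]
which is statement (1); the $H$-structure is the pointwise multiplication.

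Statement (2) then follows from (1) together with the classical computation of the rational homotopy of a function space into a GEM: $F(X, K(V,q))$ has $\pi_j\otimes\QQ \cong \check H^{q-j}(X;V)$, and summing over the factors of the rational GEM $\GLo B$ gives $\pi_*(\GLo A_\zeta)\otimes\QQ \cong \bigoplus_q \check H^{q-*}(X;\pi_q(\GLo B)\otimes\QQ) \cong \check H^*(X;\QQ)\otimes\pi_*(\GLo B)\otimes\QQ$. For statement (3), I would compare the abutment with the $E^2$-term: the $E^2$-term of the Theorem \ref{T:theoremA} spectral sequence is $\bigoplus_{p,q}\check H^p(X;\pi_q(\GLo B)\otimes\QQ)$, whose total dimension in each total degree equals, by (2), the dimension of the corresponding homotopy group of the abutment. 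Since every higher differential can only decrease ranks, and since the $E^2$ and $E^\infty$ total ranks agree in each degree, all differentials $d^r$ for $r\ge 2$ must vanish, so $E^2=E^\infty$. (One must check the relevant groups are finite-dimensional $\QQ$-vector spaces, which holds because $X$ is a finite-dimensional compact metric space and the $\check H^p$ are finitely generated.)

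The main obstacle is the rational fibre-homotopy triviality of $P\times_G\GLo B$—specifically, verifying that the $G$-action on the rational GEM $(\GLo B)_\QQ$ is homotopically trivial. The cleanest route is to observe that we never actually need triviality of the bundle: what the argument in Section 4 already establishes is that the $E^2$-term has \emph{constant} coefficients $\pi_q(\GLo B)\otimes\QQ$, so the spectral sequence is, from $E^2$ on, identical to the one for the \emph{trivial} bundle $X\times\GLo B$ with the same fibre. Thus for (3) it suffices to treat the trivial bundle, where $\GLo A_\zeta = F(X,\GLo B)$ on the nose, and for (1) and (2) one argues that the rational homotopy \emph{groups} of $\GLo A_\zeta$ depend only on $E^2$ (via the collapse just proved) and hence coincide with those of $F(X,\GLo B)$; the $H$-space comparison in (1) then comes from the fact that a rational $H$-space with given rational homotopy groups is determined up to rational $H$-equivalence by those groups when it is a GEM. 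This sidesteps the twisting issue entirely and reduces everything to the function-space computation, which is standard (Thom, Federer, and the rational model for $F(X,K(\QQ,n))$).
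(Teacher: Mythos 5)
There is a genuine gap, and it sits at the heart of your argument: the rational fibre-homotopy triviality of $P\times_G \GLo B$. Neither of the two justifications you offer for it works. First, self-equivalences of a rational GEM are \emph{not} detected on homotopy groups: on $K(\QQ,2)\times K(\QQ,4)$ the map $(x,y)\mapsto (x,\,y+x^{2})$ is a self-equivalence acting trivially on all homotopy groups but not homotopic to the identity, so "automorphisms of $\prod K(\pi_q(\GLo B)\otimes\QQ,q)$ are detected on homotopy" is false, and triviality of the induced action on $\pi_*$ would not trivialize the bundle anyway. Second, the appeal to the untwisted coefficients in $E^2$ is circular and, worse, proves too much: the $E^2$-term of the Theorem \ref{T:theoremA} spectral sequence is independent of the bundle for \emph{every} $\zeta$, yet the higher differentials genuinely depend on $\zeta$ (see the Introduction and Remark \ref{differentials}, where the Atiyah--Segal examples \cite{AS} exhibit bundle-dependent differentials surviving rationally). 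So your "sidestep," namely that from $E^2$ on the spectral sequence coincides with that of the trivial bundle and hence one may replace $\Gamma$ by $F(X,\GLo B)$, is exactly the assertion to be proved, and it is false without using the hypothesis on $B\GLo B$ in an essential way that your argument never supplies. Note also that even for the trivial bundle collapse is not automatic (Smith \cite{Smith} has rational examples with nonzero higher differentials), so the group-like hypothesis cannot be discharged after reducing to $F(X,\GLo B)$ either; it is needed there too.

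By contrast, the paper does not attempt to trivialize the bundle by hand: part (1) is quoted from \cite{KSS} (Addendum E), whose content is precisely that when $B\GLo B$ is rationally a group-like $H$-space the section space $\GLo A_\zeta$ is rationally $H$-equivalent to the gauge group of the trivial bundle, i.e.\ to $F(X,\GLo B)$; part (2) then follows from the function-space computations of \cite{LPSS}, and (3) is deduced from (2). Two further cautions about your version of (2) and (3): the implication "rational group-like $H$-space $\Rightarrow$ product of Eilenberg--MacLane spaces" needs finite-type hypotheses that $\pi_*(\GLo B)$ need not satisfy for a general Banach algebra, and your dimension-count for (3) requires the groups $\check{H}^p(X;\pi_q(\GLo B)\otimes\QQ)$ to be finite-dimensional, which is not guaranteed for a compact metric $X$ and arbitrary $B$; an isomorphism of infinite-dimensional $\QQ$-vector spaces between the total $E^2$ and the abutment does not by itself force the differentials to vanish, so the collapse should be extracted from the naturality of the comparison with the trivial bundle rather than from counting alone.
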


Note that an element $h\otimes x  \in \check{H}^*(X ; \QQ ) \otimes \pi _*(\GLo B)\otimes\QQ $ has total degree $|x| - |h|$ (corresponding
to the fact that the spectral sequence lies in the second quadrant). Our convention is to discard all terms of non-positive degree.

\begin{proof}  Part (1) of the Theorem is established in  (\cite{KSS}, page 264, Addendum E).   Then
the results of \cite{LPSS}      on the rational homotopy of function spaces of this type imply Part (2).
This of course implies that the spectral sequence must collapse.
\end{proof}

The Theorem immediately implies Corollary \ref{collapsecorollary} of the Introduction.

\begin{Rem} Our results can then be contrasted with the various results of Dadarlat (cf. \cite {Dar2009}, \cite{Dar2010}) who computes
$ \pi_*(\GLo A_\zeta )$   in terms of $K$-theory under various stringent assumptions on $B$.
\end{Rem}

\begin{Rem}\label{differentials}  In general there are many sources of differentials in the spectral sequence.  If the bundle is trivial then the problem reduces to
 the  homotopy of function spaces, and this is a complicated situation even rationally. Smith's paper \cite{Smith}  gives many examples of
non-trivial differentials in that case. In the infinite-dimensional 
continuous-trace situation explored by Atiyah and Segal \cite{AS},
they produce a plethora of differentials in the spectral sequence after passing to de Rham cohomology, and so (in contrast to the
classical Atiyah-Hirzebruch spectral sequence) this spectral sequence has differentials that do not vanish mod torsion.
\end{Rem}




\end{document}